\numberwithin{equation}{section}
\newtheorem{theorem}{Theorem}[section]
\newtheorem{proposition}[theorem]{Proposition}
\newtheorem{corollary}[theorem]{Corollary}
\theoremstyle{definition}
\newtheorem{definition}[theorem]{Definition}
\newtheorem{example}[theorem]{Example}
\newtheorem{conjecture}[theorem]{Conjecture}
\def\tr{\text{tr}}
\DeclareMathOperator{\sgn}{sgn}
\def\sR{\hbox{I\kern-.1667em\hbox{R}}}
\newcommand{\R}{\mathbb R}
\newcommand{\C}{\mathbb C}
\newcommand{\Z}{\mathbb Z}
\newcommand{\ve}{\varepsilon}
\def\tr{\hbox{Tr}}
\def\tr{\mathrm{tr}}
\begin{document}
\title[Siegel-Whittaker functions on ${\rm Sp}(2,{\R})$]
{An explicit integral representation of Siegel-Whittaker functions on ${\rm Sp}(2,{\R})$ 
for the large discrete series representations}

\author[Y. Gon]{Yasuro Gon}
\author[T. Oda]{Takayuki Oda}

\address{Faculty of Mathematics\\ Kyushu University\\
744 Motooka\\ Nishi-ku\\ Fukuoka 819-0395\\ Japan}
\email{ygon@math.kyushu-u.ac.jp}

\address{Graduate School of Mathematical Sciences\\ The University of Tokyo\\
3-8-1 Komaba\\ Meguro-ku\\ Tokyo 153-8914\\ Japan}
\email{takayuki@ms.u-tokyo.ac.jp}

\thanks{2000 Mathematics Subject Classification. 11F70; 22E45 \\
T. O. was partially supported by JSPS Grant-in-Aid for Scientific Research (A) no. 23244003.
Y. G. was partially supported by JSPS Grant-in-Aid for Scientific Research (C) no. 26400017.
}

\date{\today}

\dedicatory{In Memoriam: Fumiyuki Momose}

\begin{abstract}
{We obtain an explicit integral representation of Siegel-Whittaker functions on ${\rm Sp}(2,{\R})$ 
for the large discrete series representations. 
We have another integral expression different from that of Miyazaki \cite{Mi}.}
\end{abstract}

\keywords{large discrete series; Siegel-Whittaker functions.}

\maketitle


\section{Introduction}

In this article, we study Siegel-Whittaker functions on 
$G={\rm Sp}(2,{\R})$, the real symplectic group of degree
two for the large discrete series representations. 
Let $P_S$ be the Siegel parabolic subgroup of $G$, which is a maximal 
parabolic subgroup with abelian unipotent radical $N_S$.  
Let $\pi$ be an admissible representation of $G$ and $\xi$ 
be a {\it definite} unitary character
of $N_S$. Siegel-Whittaker model for an admissible $\pi$ is a realization of $\pi$
in the induced module from a certain closed subgroup $R$ which contains $N_S$. 
(See (\ref{R}) for definition of $R$.)  
We consider the intertwining space
\[   SW(\pi;\eta) 
= \mathrm{Hom}_{(\mathfrak{g}_{\C}, K)}(\pi, C_{\eta}^{\infty}(R \backslash G)), 
\]
where $\eta$ is an irreducible $R$-module such that $\eta|_{N_S}$ contains $\xi$,
$K$ is an maximal compact subgroup of $G$ and $\mathfrak{g}_{\C}$
is the complexification of the Lie algebra of $G$.
A function in this realization is called a {\it Siegel-Whittaker function}
for $\pi$. 

Takuya Miyazaki obtained a system of partial differential equations satisfied by Siegel-Whittaker
functions for the large discrete representations in \cite{Mi}. He also obtained multiplicity
one property and its formal power series solutions. 
In this article, we investigate further the system obtained by Miyazaki 
(Proposition \ref{Miyazaki})
and give an explicit integral representation of the Siegel-Whittaker functions, 
which is of rapid decay,   
for a large discrete series representation $\pi$. (Theorem \ref{th1}.)
In other words, we show that the rapidly decreasing Siegel-Whittaker function 
for the large
discrete series representation $\pi$ is uniquely determined and (up to polynomials) 
described by the
{\it partially confluent hypergeometric functions} in the $A$-radial part $(a_1,a_2) \in (\R_{>0})^2$:
\begin{equation} \label{SW}
\Psi_{\alpha,\beta}(a_1,a_2)
   = e^{-2 \pi (h_1a_1^2+h_2a_2^2)} 
   \int_{0}^{1}  F_{SW} \bigl( 2 \pi h_1 a_1^2 t +2 \pi h_2 a_2^2(1-t) \bigr) \, t^{\alpha-1}
(1-t)^{\beta-1} \, dt, 
\end{equation}
with 
\[ F_{SW}(x) = F(x) = e^x x^{-\gamma}
W_{\kappa,\mu}(2x).
\]    
Here, $W_{\kappa,\mu}(x)$ is the Whittaker's classical confluent hypergeometric function, 
$\alpha, \beta, \gamma>0$ and $\kappa, \mu$ are half integers
depending on parameters of representations $\pi$ and $\eta$. 
This kind of integral already appeared in Gon \cite{Go}, treated the cases
for the large discrete series representations and $P_{J}$-principal series 
representations of $\mathrm{SU}(2,2)$.  

The reason we begin to believe the validity of similar formula for ${\rm Sp}(2,{\R})$, 
is the paper Hirano-Ishii-Oda \cite{HIO}.
By this new integral expression (\ref{SW}), 
it seems possible to extend the former argument in \cite{HIO} 
of the confluence from Siegel-Whittaker functions to Whittaker functions 
for $P_J$-principal series, to the confluence 
from Siegel-Whittaker functions to Whittaker functions \cite{O1}
for the large discrete series of  ${\rm Sp}(2,{\R})$.
Here we recall the template of the integral formulas of Whittaker functions  in $(a_1,a_2) \in (\R_{>0})^2$:
\begin{equation} \label{W}
\mu_{W}(a_1, a_2) \int_{0}^{\infty} F_{W}(t) \, t^{-C} e^{- A \, t^2/a_2^2 - B \, a_1^2/t^2} \, dt, 
\end{equation}
where $F_W(t)=W_{0,\gamma}(t)$ with a positive integral parameter $\gamma$, 
$\mu_{W}(a_1,a_2)$ is a monomial in $a_1, a_2$ times $e^{\delta a_2^2}$ ($\delta>0$) and
$A, B, C$  are positive real parameters.  

Probably we can proceed a bit more.  Since Iida \cite{Iida}, Theorems 8.7 and 8.9 
(Formulas (8.8) and (8.10), resp.) give analogous integral expressions 
for the matrix coefficient of  $P_J$-principal series, 
and also Oda \cite{O2} (p.247), Theorem 6.2 similarly gives analogous integral expression for matrix coefficients of the large discrete series. 
We notice that the template of formulas in Theorem 6.2 of \cite{O2} is given by
\begin{equation} \label{MC}
\mu_{MC}(a_1, a_2) \int_{0}^{1} F_{MC} \bigl(t x_1 +(1-t) x_2 \bigr) \,  
t^{\alpha-1}(1-t)^{\beta-1} \, dt,
\end{equation}
with $x_i = - (a_i-a_i^{-1})^2/4$ $(i=1,2)$. Here $F_{MC}(x)={}_{2}F_{1}( A, B ; C : x)$
with $A, B, C, \alpha, \beta$ determined by Harish-Chandra parameters
and the components of the minimal $K$-type. The elementary factor $\mu_{MC}(a_1,a_2)$ is a monomial
of $\bigl( (a_i \pm a_i^{-1})/2 \bigr)^{\pm 1/2}$ $(i=1,2)$.

These facts suggests that there will be a deformation of confluence 
(\ref{MC}) $\mapsto$ (\ref{SW})
from the matrix coefficients realization to Siegel-Whittaker realization in a very simple natural way. 
We believe the similar argument is possible for the principal series: but in this case the natures of integrands of 
Whittaker case Ishii \cite{Ishii2}, Theorem 3.2  
and Siegel-Whittaker case \cite{Ishii}, Theorem 10.1 are still difficult to deform. 

For the $P_J$-principal series of the other group $\mathrm{SU(2,2)}$, we have explicit
integral expression of the matrix coefficients (Theorem 5.4 of \cite{KoOda}) similar to the above formula
(\ref{MC}).
We can expect the confluence from this formula to the integral expression of the Siegel-Whittaker
realization of \cite{Go} analogous to (\ref{SW}), corresponding to the limit of one parameter conjugations of 
a compact subgroup $K$ of $\mathrm{SU(2,2)}$:
\[ \lim_{t \to 0} h_t K h_t^{-1} = R. \] 
The deformation of the Siegel-Whittaker function of the $P_J$-principal series to the Whittaker function could
handled in a similar way as in \cite{HIO}.

In view of this observation, we may hope that similar phenomenon occurs for more general groups 
$\mathrm{SO}(2,q)$ $(q > 4)$.  

\section{Preliminaries}
\subsection{Basic notations}
Let $G$ be the real symplectic group of degree two:
\[ \mathrm{Sp}(2,\R) = \biggl\{  g \in \mathrm{SL}(4,\R) \, \bigg| \, 
{}^t g J_2 g = J_2 
= \begin{pmatrix}
0_2 & 1_2 \\
-1_2 & 0_2 
\end{pmatrix} 
\biggr\}, \]
with $1_2$ the unit matrix of degree two and 
$0_2$ the zero matrix of degree two. 

Fix a maximal compact subgroup $K$ of $G$ by
\[ K= \biggl\{  k(A,B) =  \begin{pmatrix}
A & B \\
-B & A 
\end{pmatrix}   \in G \, \bigg| \, 
A, B \in \mathrm{M}(2,\R) \biggr\}. \]
It is isomorphic to the unitary group $\mathrm{U}(2)$ via the homomorphism

\[ K \ni k(A,B) \mapsto A+\sqrt{-1}B \in \mathrm{U}(2). \]

We define a certain spherical subgroup $R$ of $G$ as follows.
Let $P_{S}= L_S \ltimes N_S$ be the Siegel parabolic subgroup with the Levi part $L_S$
and the abelian unipotent radical $N_S$ given by
\[ L_S = \biggl\{  \begin{pmatrix}
A & 0_2 \\
0_2 & {}^t A^{-1} 
\end{pmatrix}  \, \bigg| \, 
A \in \mathrm{GL}(2,\R) \biggr\}, \]
\[ N_S = \biggl\{  n(T) = \begin{pmatrix}
1_2 & T \\
0_2 & 1_2 
\end{pmatrix}  \, \bigg| \, 
{}^t T = T \in \mathrm{M}(2,\R) \biggr\}. \]
Fix a non-degenerate unitary character $\xi$ of $N_S$ by
\[  \xi(n(T)) = \exp \bigl( 2 \pi \sqrt{-1} \tr (H_{\xi} T) \bigr) \]
with $H_{\xi} = \bigl(  
\begin{smallmatrix}
h_1 & h_3/2 \\
h_3/2 & h_2
\end{smallmatrix} \bigr)
\in \mathrm{M}(2,\R)$ and $\det H_{\xi} \ne 0$. 
Consider the action of $L_S$ on $N_S$ by conjugation and the induced action on the character 
group $\widehat{N_S}$.
Define $\mathrm{SO}(\xi)$ to the identity component of the subgroup of $L_S$ which stabilize $\xi$:
\[  \mathrm{SO}(\xi) := \mathrm{Stab}_{L_{S}}(\xi)^{\circ}
  =  \biggl\{  \begin{pmatrix}
A & 0_2 \\
0_2 & {}^t A^{-1} 
\end{pmatrix}  \in L_{S} \, \bigg| \, 
{}^t A H_{\xi} A = H_{\xi} \biggr\}. \]
Then $\mathrm{SO}(\xi)$ is isomorphic to $\mathrm{SO}(2)$ if $\det H_{\xi}>0$ and
to $\mathrm{SO}_{\circ}(1,1)$ if $\det H_{\xi}<0$.
In this article we treat the case that $\xi$ is a `definite' character, that is $\det H_{\xi}>0$.
So we may assume $h_1,h_2>0$ and $h_3=0$ without loss of generality.
We sometimes identify the element of $\mathrm{SO}(\xi)$ with its upper left $2 \times 2$
component.
Fix a unitary character $\chi_{m_0}$ $(m_0 \in \Z)$ 
of $\mathrm{SO}(\xi) \cong \mathrm{SO}(2)$ by
\begin{equation} \label{chi}
\chi_{m_0} 
\biggl( 
\begin{pmatrix}
\sqrt{h_1}  &  \\
  & \sqrt{h_2} 
\end{pmatrix} ^{-1}
\begin{pmatrix}
\cos \theta  & \sin \theta \\
 - \sin \theta & \cos \theta 
\end{pmatrix} 
\begin{pmatrix}
\sqrt{h_1}  &  \\
  & \sqrt{h_2} 
\end{pmatrix} 
\biggr)
= \exp(\sqrt{-1}m_0 \theta). 
\end{equation}
We define 
\begin{equation} \label{R}
R=\mathrm{SO}(\xi) \ltimes N_{S}  \quad \mbox{ and } \quad 
\eta=\chi_{m_0}  \boxtimes \xi.
\end{equation}
  
Taking a maximal split torus $A$ of $G$ by
\[ A=\{ a=(a_1,a_2) = \mathrm{diag}(a_1,a_2,a_1^{-1},a_2^{-1}) \mid a_1, \, a_2>0  \}, 
\]
we have the decomposition $G=RAK$. 

\subsection{Siegel-Whittaker functions}
We consider the space $C_{\eta}^{\infty}(R \backslash G)$ of complex valued $C^{\infty}$
functions $f$ on $G$ satisfying 
\[ f(rg) = \eta(r) f(g) \quad \forall (r,g) \in R \times G. \]
By the right translation, $C_{\eta}^{\infty}(R \backslash G)$ is a smooth $G$-module and we denote
the same symbol its underlying $(\mathfrak{g}_{\C}, K)$-module. 
For an irreducible admissible representation $(\pi, H_{\pi})$ of $G$ and the 
subspace $H_{\pi, K}$ of $K$-finite vectors, the intertwining space 
\[  \mathcal{I}_{\eta, \pi} = \mathrm{Hom}_{(\mathfrak{g}_{\C}, K)}(H_{\pi, K}, C_{\eta}^{\infty}(R \backslash G))  \]
between the $(\mathfrak{g}_{\C}, K)$-modules is called the space of 
{\it algebraic Siegel-Whittaker functionals}. 
For a finite-dimensional $K$-module $(\tau, V_{\tau})$, denote by 
$C_{\eta, \tau}^{\infty}(R \backslash G/K)$ the space
\[ \bigl\{ \phi \colon G \to V_{\tau}, \, C^{\infty}
\mid \phi(r g k) = \eta(r) \tau(k^{-1}) \phi(g)
\quad \forall (r,g,k) \in R \times G \times K  \bigr\}.  \]
Let $(\tau^{*}, V_{\tau^{*}})$ be a $K$-type of $\pi$ and $\iota \colon V_{\tau^{*}} \to H_{\pi}$
be an injection. Here, $\tau^{*}$ means the contragredient representation of $\tau$. 
Then for $\Phi \in \mathcal{I}_{\eta, \pi}$, we can find an element $\phi_{\iota}$ in
\[ C_{\eta, \tau}^{\infty}(R \backslash G/K) 
= C_{\eta}^{\infty}(R \backslash G) \otimes V_{\tau^{*}} 
\cong \mathrm{Hom}_{K}(V_{\tau^{*}}, C_{\eta}^{\infty}(R \backslash G)) \]
via $\Phi(\iota(v^{*}))(g)=\langle v^{*}, \phi_{\iota}(g) \rangle$ with 
$\langle, \rangle$ the canonical pairing on $V_{\tau^{*}} \times V_{\tau}$. 

Since there is the decomposition $G=RAK$, our generalized spherical function $\phi_{\iota}$
is determined by its restriction $\phi_{\iota}|_{A}$, which we call the {\it radial part} of $\phi_{\iota}$.
For a subspace $X$ of  $C_{\eta, \tau}^{\infty}(R \backslash G/K)$, we denote
$X|_{A} =\{  \phi|_{A}  \in C^{\infty}(A) \mid \phi \in X \}$.

Let us define the space $\mathrm{SW}(\pi, \eta, \tau)$ of Siegel-Whittaker functions and its subspace 
$\mathrm{SW}(\pi, \eta, \tau)^{\text{rap}}$ as follows: 
\[  \mathrm{SW}(\pi, \eta, \tau) = \bigcup_{ \iota \in \mathrm{Hom}_K(\tau^{*}, \pi)}    
\{ \phi_{\iota} \mid \Phi \in \mathcal{I}_{\eta, \pi} \} \]
and 
\[  \mathrm{SW}(\pi, \eta, \tau)^{\text{rap}} 
= \bigl\{ \phi_{\iota} \in \mathrm{SW}(\pi, \eta, \tau) \mid 
\phi_{\iota} |_{A} \mbox{ decays rapidly as } a_1, a_2 \to \infty  \bigr\}. \]
We call an element in $\mathrm{SW}(\pi, \eta, \tau)$ a {\it Siegel-Whittaker function}
for $(\pi, \eta, \tau)$. 

\subsection{Parametrization of the discrete series representations}
Let $E_{ij} \in \mathrm{M}_4(\R)$ be the matrix unit with $1$ as its $(i,j)$-component and 
$0$ at the other entries.
The root system of $G$ with respect to a compact Cartan subgroup
\[ T=\exp \bigl( \R(E_{13}-E_{31})+ \R(E_{24}-E_{42}) \bigr) \]
is given by a set of vectors in the Euclidean plane:
\[ \{ \pm 2 \ve_1, \pm 2 \ve_2, \pm \ve_1 \pm \ve_2  \}. \]
Here, 
\begin{align*}
\ve_1 \bigl( r_1(E_{13}-E_{31})+r_2(E_{24}-E_{42}) \bigr) =& \sqrt{-1} r_1, \\
\ve_2 \bigl( r_1(E_{13}-E_{31})+r_2(E_{24}-E_{42}) \bigr) =& \sqrt{-1} r_2.
\end{align*}
We fix a subset of simple roots and the associated positive roots by
\[ \{ \ve_1-\ve_2, \ve_2 \}, \quad  \{ 2 \ve_1, \ve_1+\ve_2, \ve_1-\ve_2, 2 \ve_2   \}  \]
respectively.

Then the set of the unitary characters of $T$ (or their derivatives)
is identified naturally with $\Z \oplus \Z$, and the subset consisting of 
dominant integral weight is
\[ \Xi = \{ (n_1, n_2)  \in \Z \oplus \Z \mid n_1 \ge n_2 \}. \]
There is a bijection between $\widehat{K}$ and $\Xi$ 
by highest weight theory. Because the half-sum of the positive root is integral,
the discrete series representations of $G=\mathrm{Sp}(2,\R)$ are parametrized
by the subset of regular elements in $\Xi$:
\[ \Xi' = \{ (n_1,n_2) \in \Z \oplus \Z 
\mid n_1 > n_2, \, n_1 \ne 0, \, n_2 \ne 0, \, n_1+n_2 \ne 0  \}.   \]
Here the condition $n_1 > n_2$ means the positivity of weight $(n_1, n_2)$ with respect to the compact root
$\ve_1 - \ve_2=(1,-1)$.

The subsets $\Xi_{\text{I}}=\{(n_1,n_2) \mid n_1>n_2>0  \}$
and  $\Xi_{\text{IV}}=\{(n_1,n_2) \mid 0> n_1>n_2 \}$
parametrize  the holomorphic discrete series and the anti-holomorphic discrete 
series representations, respectively.
Set
\[ \Xi_{\text{II}}=\{(n_1,n_2) \mid n_1> 0> n_2, \, n_1+n_2 >0 \},  \]
and
\[ \Xi_{\text{III}}=\{(n_1,n_2) \mid n_1> 0> n_2, \, 0 > n_1+n_2 \}.  \]
Then the union $\Xi_{\text{II}} \cup \Xi_{\text{III}}$ parametrize the 
large discrete series representations of $G$.

\section{Miyazaki's results} 

Miyazaki derived a system of partial differential equations satisfied by Siegel-Whittaker
functions for the large discrete representations in \cite{Mi}. 
He also obtained multiplicity one property. We recall his results in this section.

Let $\tau=\tau_{(\lambda_1,\lambda_2)} = \mathrm{Sym}^{\lambda_1-\lambda_2}
\otimes \det^{\lambda_2}$ be the irreducible $K$-module with the highest weight $(\lambda_1, \lambda_2)$, 
then the dimension of $\tau$ is $d+1$ with $d=\lambda_1-\lambda_2$. 
We take the basis $\{ v_{j} \}_{j=0}^{d}$ of $V_{\tau^{*}}$ 
with $\tau^{*} = \tau_{(-\lambda_2,-\lambda_1)}$ as in
\cite[Lemma 3.1]{Mi}.

We remark on a compatibility condition. For a non-zero function $\phi$ in 
$C^{\infty}_{\eta,\tau^{*}}(R \backslash G /K)$, we have
\[ \phi(a) = \phi(mam^{-1}) = (\chi_{m_0} \boxtimes \xi)(m) \tau_{(-\lambda_2,-\lambda_1)}(m) 
\, \phi(a),
\]
where, $a \in A$ and $m \in \mathrm{SO}(\xi) \cap Z_{K}(A)=\{ \pm 1_{4} \}$. 
If we take $m=-1_{4}$, $(\chi_{m_0} \boxtimes \xi)(m)=\chi_{m_0}(m)=(-1)^{m_0}$ and
$\tau_{(-\lambda_2,-\lambda_1)}(m)=(-1)^{d}$ imply that $(m_0 +d)/2$ is an integer.  

\begin{proposition}[Miyazaki \cite{Mi}] \label{Miyazaki}
Let $\pi = \pi_{\Lambda}$ be a large discrete series representation of $G$ with 
the Harish-Chandra parameter
$\Lambda = (\lambda_1-1,\lambda_2) \in \Xi_{\mathrm{II}}$ and its minimal
$K$-type $\tau=\tau_{(\lambda_1,\lambda_2)}$.  
Let $\xi$ be a unitary character of $N_{S}$ associated with 
a positive definite matrix $H_{\xi} = \bigl(  
\begin{smallmatrix}
h_1 & 0 \\
0 & h_2
\end{smallmatrix} \bigr)$. Put $\eta=\chi_{m_0}  \boxtimes \xi $, as in $($\ref{chi}$)$. 
Then we have the following:
\begin{enumerate}
\item we have $\dim_{\C} \mathrm{SW}(\pi, \eta, \tau) \le 4$ and a function
\[  \phi_{SW}(a) = \sum_{j=0}^{d} \Bigl\{ 
(\sqrt{h_1}a_1)^{\lambda_1-j} (\sqrt{h_2}a_2)^{\lambda_2+j} e^{-2 \pi(h_1a_1^2+h_2a_2^2)}
c_j(a) \Bigr\} \, v_{j} \]
is in the space $\mathrm{SW}(\pi_{\Lambda}, \eta, \tau)|_{A}$ if and only if
$\{c_j(a)\}_{j=0}^{d}$ is a smooth solution of the following system:
\begin{equation}
\Bigl[ \partial_1+j\frac{h_2a_2^2}{\triangle} \Bigr] c_{j-1}(a) 
+ \sqrt{-1}m_0 \frac{h_2a_2^2}{\triangle} \, c_j(a)
-(d-j) \frac{h_2a_2^2}{\triangle} \, c_{j+1}(a)=0
\quad (1 \le j \le d),  \label{M1}
\end{equation}
\begin{equation}
j \frac{h_1a_1^2}{\triangle} \, c_{j-1}(a)
+ \sqrt{-1}m_0 \frac{h_1a_1^2}{\triangle} \, c_j(a)
+\Bigl[ \partial_2-(d-j) \frac{h_1a_1^2}{\triangle} \Bigr] c_{j+1}(a) 
=0
\quad (0 \le j \le d-1), \label{M2}
\end{equation}
\begin{align}
& h_1a_1^2 \Bigl[ \partial_2  - 8 \pi h_2 a_2^2 -2j \frac{h_2 a_2^2}{\triangle} +2 \lambda_2-2 \Bigr] c_{j-1}(a) 
- 2 \sqrt{-1} m_0 \frac{h_1a_1^2h_2a_2^2}{\triangle} \, c_j(a) \label{M3}
\\
& + h_2a_2^2 \Bigl[ \partial_1  - 8 \pi h_1 a_1^2 +2(d-j) \frac{h_1 a_1^2}{\triangle} +2 \lambda_2-2 \Bigr] c_{j+1}(a) 
=0 \quad (1 \le j \le d-1), \nonumber
\end{align}
with $\partial_i = a_i (\partial/\partial a_i)$ $(i=1,2)$ and $\triangle=h_1a_1^2 - h_2a_2^2$. 
\item  $\dim_{\C} \mathrm{SW}(\pi, \eta, \tau)^{\text{rap}} \le 1$. 
\end{enumerate}
\end{proposition}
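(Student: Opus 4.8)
The plan is to derive the system satisfied by the radial part of a Siegel--Whittaker function from scratch, by combining the first-order equations coming from the Schmid (gradient) operators attached to the minimal $K$-type with the Casimir equation, and then to analyze the resulting holonomic system. Since $G = RAK$ and $\mathrm{SO}(\xi) \cap Z_K(A) = \{\pm 1_4\}$, a function $\phi \in C^\infty_{\eta,\tau^*}(R\backslash G/K)$ is determined by $\phi|_A$, which I would expand as $\phi|_A(a) = \sum_{j=0}^d b_j(a)\, v_j$ in the basis $\{v_j\}$ of \cite[Lemma 3.1]{Mi}; the compatibility with $m=-1_4$ is exactly the parity condition $(m_0+d)/2\in\Z$ already noted. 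The computational heart is then to express, for each positive noncompact root vector of $\mathfrak p_\pm$, its left-translation action on $\phi$ at a point $a\in A$: one decomposes $\mathrm{Ad}(a^{-1})$ of that vector along $\mathfrak g_\C = \mathfrak r_\C \oplus \mathfrak a_\C \oplus \mathfrak k_\C$, so that the $\mathfrak n_S$-component contributes through $d\xi$ (producing the $h_i a_i^2$ terms, and after the Gaussian prefactor the $8\pi h_i a_i^2$ terms), the $\mathfrak{so}(\xi)$-component through $d\chi_{m_0}$ (the $\sqrt{-1}\,m_0$ terms), the $\mathfrak a$-component through $\partial_i = a_i\,\partial/\partial a_i$, and the $\mathfrak k_\C$-component through $d\tau^*$; the coefficients in this decomposition are rational in $h_1 a_1^2, h_2 a_2^2$ and explain the denominators $\triangle = h_1 a_1^2 - h_2 a_2^2$.

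Next I would use the structure of $\pi_\Lambda$ for $\Lambda\in\Xi_{\mathrm{II}}$: the module is generated by its minimal $K$-type, which is annihilated precisely by the gradient operators attached to the noncompact roots $2\ve_2\subset\mathfrak p_+$ and $-2\ve_1,\,-(\ve_1+\ve_2)\subset\mathfrak p_-$, i.e.\ the operators landing in the $K$-types of highest weights $(\lambda_1,\lambda_2+2)$, $(\lambda_1-2,\lambda_2)$, $(\lambda_1-1,\lambda_2-1)$, of dimensions $d-1,\,d-1,\,d+1$. Spelling these operators out in radial coordinates via the Clebsch--Gordan projections $\mathfrak p_\pm\otimes V_\tau\to V_\sigma$ together with the formulas above gives $3d-1$ first-order equations for the $b_j$; writing $b_j(a) = (\sqrt{h_1}a_1)^{\lambda_1-j}(\sqrt{h_2}a_2)^{\lambda_2+j}e^{-2\pi(h_1a_1^2+h_2a_2^2)}c_j(a)$ --- the monomial and Gaussian being chosen to cancel the pure multiplication terms --- converts them (after forming the natural linear combinations, the Casimir equation appearing as a consequence) into exactly the system (\ref{M1})--(\ref{M3}), the $\triangle^{-1}$ cross-terms being responsible for the coupling of consecutive $c_j$. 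Conversely, a smooth solution of (\ref{M1})--(\ref{M3}) gives back $\phi|_A$, hence $\phi\in C^\infty_{\eta,\tau^*}(R\backslash G/K)$, and since these relations are precisely the defining relations of $\pi_\Lambda$ restricted to its minimal $K$-type, $\phi$ extends uniquely to an element of $\mathcal I_{\eta,\pi}$; this yields the ``if and only if'' of (i).

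For the dimension bounds I would analyze the holonomic system (\ref{M1})--(\ref{M3}). Away from $\triangle=0$ one can eliminate $c_1,\dots,c_d$ algebraically in favour of $c_0$ using (\ref{M1})--(\ref{M2}) as a recursion, or restrict to a generic line $h_2 a_2^2 = u\,h_1 a_1^2$, obtaining an ordinary differential system whose rank one checks to equal $4$; this gives $\dim_\C\mathrm{SW}(\pi,\eta,\tau)|_A\le 4$, hence part (i). For part (ii) I would carry out the asymptotic analysis of this rank-$4$ system as $a_1,a_2\to\infty$: the $8\pi h_i a_i^2$ terms in (\ref{M3}) force three of the four solutions to have some $c_j$ growing like $e^{+4\pi h_i a_i^2}$, which the prefactor $e^{-2\pi(h_1a_1^2+h_2a_2^2)}$ cannot absorb, so the rapid-decay requirement cuts the solution space down to at most the one remaining slowly varying solution.

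The main obstacle is the step-2 reduction of the noncompact root-vector actions to the radial variables, which is heavy bookkeeping: the $\mathrm{Ad}(a)$-conjugation, the explicit Clebsch--Gordan coefficients in the basis $\{v_j\}$, and the singular $\triangle^{-1}$ coefficients all have to be tracked consistently. A close second is the holonomic-rank and asymptotic analysis needed to pin down the sharp bounds $4$ and $1$ rather than mere finiteness.
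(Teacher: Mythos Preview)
The paper does not prove this proposition at all: immediately after the statement it says ``This is a paraphrase of Propositions 10.2, 10.7 and Theorem 11.5 of \cite{Mi}'', and merely records the notational substitutions $\chi(Y_\eta)\mapsto \sqrt{-1}m_0/\sqrt{h_1h_2}$ and $D\mapsto\triangle$. In other words, the proposition is quoted from Miyazaki's paper as established background, not re-derived.

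Your proposal is therefore not so much a different proof as an attempt to reconstruct Miyazaki's original argument. As a sketch of that, the outline is reasonable and matches the standard machinery (Schmid/gradient operators on the minimal $K$-type, the radial decomposition $\mathfrak g_\C=\mathfrak r_\C\oplus\mathfrak a_\C\oplus\mathfrak k_\C$ via $\mathrm{Ad}(a^{-1})$, the annihilation conditions characterizing the large discrete series for $\Lambda\in\Xi_{\mathrm{II}}$, and a holonomic-rank/asymptotic analysis for the dimension bounds). If you actually want to carry this out, be aware that the honest work is exactly where you flag it: the explicit Clebsch--Gordan bookkeeping in the basis $\{v_j\}$ and the precise rank computation giving $4$ (Miyazaki does this via power-series indicial analysis, not by eliminating to a single scalar equation), and the rapid-decay selection (Miyazaki argues via characteristic exponents of the system rather than a direct $a_i\to\infty$ asymptotic of \eqref{M3} alone). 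But for the purposes of the present paper none of this is needed: a one-line citation to \cite{Mi} suffices.
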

This is a paraphrase of Propositions 10.2, 10.7 and Theorem 11.5 of \cite{Mi}.
Here (\ref{M1}), (\ref{M2}), (\ref{M3}) are essentially identical equations
to (10.4), (10.5), (10.6) of \cite{Mi} deduced from Proposition 10.2.
However we replace the symbol $\chi(Y_{\eta})$ of \cite{Mi} by
its explicit value $\sqrt{-1}m_0/\sqrt{h_1h_2}$, and the symbol $D$ of \cite{Mi} by $\triangle$.

\section{Partially Confluent hypergeometric functions in two variables} 

We introduce and study certain partially confluent hypergeometric functions in two variables  
on $A \simeq (\R_{>0})^2$. 
These functions play a key role in describing Siegel-Whittaker functions 
$\phi_{SW}(a)$. We remark that these types of  confluent hypergeometric functions have 
also appeared in \cite{Go}, for the large discrete series representations and $P_{J}$-principal series 
representations of $\mathrm{SU}(2,2)$,  
and \cite{HIO}, for the $P_{J}$-principal series representations of $\mathrm{Sp(2,\R)}$.  

\begin{definition}[Partially confluent hypergeometric functions]
\label{def:fk}
Let $\pi = \pi_{\Lambda}$ be a large discrete series representation of $G$ with 
the Harish-Chandra parameter
$\Lambda = (\lambda_1-1,\lambda_2) \in \Xi_{\mathrm{II}}$ and its minimal
$K$-type $\tau=\tau_{(\lambda_1,\lambda_2)}$.  
Let $\xi$ be a unitary character of $N_{S}$ associated with 
a positive definite matrix $H_{\xi} = \bigl(  
\begin{smallmatrix}
h_1 & 0 \\
0 & h_2
\end{smallmatrix} \bigr)$. 
Put $\eta=\chi_{m_0}  \boxtimes \xi $, as in $($\ref{chi}$)$. 

We assume that 
\[ |m_0| \ge d   \quad   \mbox{  and  } \quad  |m_0|\equiv d \pmod{2}.
\]
For $0 \le k \le d$, define 
\begin{align}
f_k(a) & = f_k(a; [\pi, \eta, \tau]) \nonumber \\
& = (\sqrt{h_1}a_1)^{2k+1} (\sqrt{h_2}a_2)^{2d+1-2k} (h_1a_1^2-h_2a_2^2)^{\frac{|m_0|-d}{2}} \\
& \times \int_{0}^{1}F \bigl( 2 \pi h_1 a_1^2 t +2 \pi h_2 a_2^2(1-t) \bigr) \, t^{\frac{|m_0|-d-1}{2}+k}
(1-t)^{\frac{|m_0|+d-1}{2}-k} \, dt, \nonumber
\end{align}
with 
\[ F(x) = e^x x^{-\frac{|m_0|+\lambda_1+1}{2}}
W_{\frac{\lambda_1-|m_0|-1}{2},\frac{\lambda_2}{2}}(2x).
\] 
Here, $W_{\kappa,\mu}(z)$ is Whittaker's confluent hypergeometric function. 
(See \cite{WW} Chapter 16 for definition.) 
\end{definition}
Since the indices $\alpha_k-1=\frac{|m_0|-d-1}{2}+k$, 
$\beta_k-1=\frac{|m_0|+d-1}{2}-k$ in the integrand of $f_k$
satisfy $\alpha_k,\beta_k>0$ $(0 \le k \le d)$, 
we see that
$f_k(a)$ is a smooth function on $A$ and of moderate growth when each $a_1,a_2$ tends to infinity.
We have further more,  

\begin{proposition} \label{prop:fk}
Partially confluent hypergeometric functions
$\{f_k(a)\}_{k=0}^{d}$ 
satisfy the following system of the difference-differential
equations.
\begin{equation} 
\partial _1 \, f_k = -(2k+1) \frac{h_2a_2^2}{\triangle} \, f_k 
+(|m_0|+d-1-2k) \frac{h_2a_2^2}{\triangle} \, f_{k+1} \quad (0 \le k \le d-1), 
\label{d_1fk}
\end{equation}

\begin{equation} 
\partial _2 \, f_k = (2d-2k+1) \frac{h_1a_1^2}{\triangle} \, f_k 
-(|m_0|-d-1+2k) \frac{h_1a_1^2}{\triangle} \, f_{k-1}  \quad (1 \le k \le d), 
\label{d_2fk}
\end{equation}

\begin{align} 
& \bigl[ (\partial_1+\partial_2)^2  +2(\lambda_2-2)(\partial_1+\partial_2)
-8 \pi (h_1 a_1^2 \partial_1+h_2 a_2^2 \partial_2)
-4(\lambda_2-1) 
 \bigr]  f_k (a) = 0 \label{omegafk} \\
 & (0 \le k \le d). \nonumber
\end{align} 
Here, $\triangle=h_1a_1^2-h_2a_2^2$. 
\end{proposition}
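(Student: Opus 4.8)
The plan is to prove the three identities \eqref{d_1fk}, \eqref{d_2fk}, \eqref{omegafk} by direct computation on the integral representation of $f_k(a)$, exploiting the special structure of the integrand. Write $f_k(a) = g_k(a)\cdot I_k(a)$, where
\[
g_k(a) = (\sqrt{h_1}a_1)^{2k+1}(\sqrt{h_2}a_2)^{2d+1-2k}\,\triangle^{\frac{|m_0|-d}{2}}
\]
is the elementary prefactor and
\[
I_k(a) = \int_0^1 F\bigl(2\pi h_1 a_1^2 t + 2\pi h_2 a_2^2(1-t)\bigr)\, t^{\alpha_k-1}(1-t)^{\beta_k-1}\,dt,
\]
with $\alpha_k = \tfrac{|m_0|-d-1}{2}+k+1$, $\beta_k = \tfrac{|m_0|+d-1}{2}-k+1$. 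The operators $\partial_i = a_i\partial/\partial a_i$ act nicely: $\partial_1 g_k/g_k = (2k+1) + (|m_0|-d)\,h_1a_1^2/\triangle$ and similarly for $\partial_2$, while differentiating under the integral sign in $I_k$ brings down a factor $4\pi h_1 a_1^2 t\cdot F'(\cdot)$ (resp. $4\pi h_2 a_2^2(1-t)\cdot F'(\cdot)$). So the left-hand sides of \eqref{d_1fk}, \eqref{d_2fk} become $g_k$ times a sum of an "algebraic" piece and an integral involving $F'$.

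First I would establish \eqref{d_1fk} and \eqref{d_2fk}. The key algebraic trick is that $t$ and $1-t$ shift the index $k$: multiplying the integrand of $I_k$ by $t$ (resp. $1-t$) and adjusting the Beta-type exponents turns $I_k$ into a multiple of $I_{k+1}$ (resp. $I_{k-1}$), up to the elementary prefactor ratio $g_{k+1}/g_k$ etc. Also, one can write $2\pi h_1 a_1^2 t = \bigl(2\pi h_1 a_1^2 t + 2\pi h_2 a_2^2(1-t)\bigr) - 2\pi h_2 a_2^2(1-t)$, i.e.\ $x\cdot(\text{the argument of }F)$ minus the complementary term, which lets one absorb the $F'$-integral after an integration by parts in $t$: since $\tfrac{d}{dt}F(2\pi h_1a_1^2 t + 2\pi h_2 a_2^2(1-t)) = 2\pi(\triangle)\,F'(\cdot)$, integration by parts in $t$ against $t^{\alpha_k-1}(1-t)^{\beta_k-1}$ converts the $F'$-integral into $F$-integrals with shifted exponents plus boundary terms; the conditions $\alpha_k,\beta_k>0$ guarantee the boundary terms vanish. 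Combining these two manipulations, the $F'$-contribution collapses into a linear combination of $f_k$, $f_{k+1}$ (resp. $f_{k-1}$), and matching coefficients gives exactly \eqref{d_1fk} (resp. \eqref{d_2fk}). This is essentially bookkeeping with Beta integrals.

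For \eqref{omegafk}, the cleanest route is to \emph{not} expand $(\partial_1+\partial_2)^2$ blindly but to use that $F(x) = e^x x^{-\gamma}W_{\kappa,\mu}(2x)$ with $\gamma = \tfrac{|m_0|+\lambda_1+1}{2}$, $\kappa = \tfrac{\lambda_1-|m_0|-1}{2}$, $\mu = \tfrac{\lambda_2}{2}$ satisfies a second-order ODE. Since $W_{\kappa,\mu}(z)$ solves Whittaker's equation $W'' + (-\tfrac14 + \tfrac{\kappa}{z} + \tfrac{1/4-\mu^2}{z^2})W = 0$, substituting $W_{\kappa,\mu}(2x) = e^x x^{\gamma}F(x)$ yields an ODE for $F$ of the form $x F'' + (\text{linear in }x)F' + (\text{linear in }x)F = 0$ with coefficients built from $\gamma,\kappa,\mu$, i.e.\ from $\lambda_1,\lambda_2,|m_0|$. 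Now observe that when $(\partial_1+\partial_2)$ hits $I_k$, it brings down $2\pi x\,F'(x)$ evaluated at $x = 2\pi h_1 a_1^2 t + 2\pi h_2 a_2^2(1-t)$ inside the integral (the $t$ and $1-t$ combine to give back the full argument $x$), and $(\partial_1+\partial_2)^2$ brings down a combination of $x F''$ and $xF'$; meanwhile $h_1a_1^2\partial_1 + h_2 a_2^2\partial_2$ acting on $I_k$ produces $\int (\ldots)^2\cdot 2\pi F'$-type terms that one re-expresses. The plan is to show that the whole operator in \eqref{omegafk}, applied to $f_k = g_k I_k$, reduces under the integral sign to $g_k$ times the integral of $\bigl[\text{Whittaker ODE for }F\bigr]$ times $t^{\alpha_k-1}(1-t)^{\beta_k-1}$, which vanishes identically. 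The contributions from $(\partial_1+\partial_2)$ hitting the prefactor $g_k$ and cross-terms must be checked to cancel; here the precise choice of $\gamma$ and of the exponent $\tfrac{|m_0|-d}{2}$ on $\triangle$ in $g_k$ is exactly what makes the $\triangle$-dependent pieces disappear.

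The main obstacle I expect is \eqref{omegafk}: keeping track of how $(\partial_1+\partial_2)^2$ interacts simultaneously with the $\triangle^{(|m_0|-d)/2}$ factor, the monomials $a_i^{2k+1}$, and the integral — in particular verifying that all the "non-ODE" remainder terms (those not proportional to the Whittaker operator applied to $F$) cancel, rather than merely hoping they do. A useful sanity check along the way is that \eqref{omegafk} is independent of $k$ and symmetric in the two "halves" of the argument of $F$, which constrains the algebra. I would also double-check the degenerate cases $k=0$ in \eqref{d_2fk} and $k=d$ in \eqref{d_1fk} (where the relevant $f_{k\pm1}$ is absent) against the boundary-term analysis, and confirm that the congruence/inequality assumptions $|m_0|\ge d$, $|m_0|\equiv d\pmod 2$ are precisely what make $\alpha_k,\beta_k$ positive integers shifted by half-integers so that the integration-by-parts boundary terms in the proofs of \eqref{d_1fk}, \eqref{d_2fk} indeed vanish.
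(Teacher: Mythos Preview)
Your plan is correct and matches the paper's proof essentially step for step: the paper also factors $f_k = g_k\,\check f_k$ with $\check f_k$ the bare integral, obtains \eqref{d_1fk}--\eqref{d_2fk} by computing $\partial_i\check f_k$ via the integration-by-parts in $t$ you describe (the paper just states the resulting identities for $\partial_i\check f_k$), and for \eqref{omegafk} conjugates the operator $\Omega$ through $g_k$ to an explicit $\check\Omega$ and shows $\check\Omega\check f_k$ is the integral of an expression that reduces, after the substitution $F(x)=e^{x}x^{-\gamma}H(x)$, to Whittaker's equation for $H(x)=W_{\kappa,\mu}(2x)$. Your anticipated ``main obstacle'' is exactly the computation of $\check\Omega$, which the paper carries out explicitly; once that is written down, the cancellation of the non-ODE remainder terms is automatic.
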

\begin{proof}
For $0 \le k \le d$, put 
\[ \check{f_k}(a)
= \int_{0}^{1}F \bigl( 2 \pi h_1 a_1^2 t +2 \pi h_2 a_2^2(1-t) \bigr) \, t^{\frac{|m_0|-d-1}{2}+k}
(1-t)^{\frac{|m_0|+d-1}{2}-k} \, dt. 
\]
Then we can verify that
\[
\partial_1 \, f_k =  (\sqrt{h_1}a_1)^{2k+1} (\sqrt{h_2}a_2)^{2d+1-2k} \triangle^{ \frac{|m_0|-d}{2}}
\Bigl[ \partial_1+(2k+1)+(|m_0|-d) \frac{h_1a_1^2}{\triangle} \Bigr] \check{f_k} \\ 
\]
and
\[
\partial_1 \, \check{f_k} =
-(|m_0|-d+1+2k) \frac{h_1a_1^2}{\triangle} \check{f_k}
+(|m_0|+d-1-2k) \frac{h_1a_1^2}{\triangle} \check{f_{k+1}}
\]
for $0 \le k \le d-1$. Therefore, we obtain the formula (\ref{d_1fk}).
Similarly, we have
\[
\partial_2 \, f_k =  (\sqrt{h_1}a_1)^{2k+1} (\sqrt{h_2}a_2)^{2d+1-2k} \triangle^{ \frac{|m_0|-d}{2}}
\Bigl[ \partial_2+(2d+1-2k)-(|m_0|-d) \frac{h_2a_2^2}{\triangle} \Bigr] \check{f_k} \\ 
\]
and
\[
\partial_2 \, \check{f_k} =
-(|m_0|-d-1+2k) \frac{h_2a_2^2}{\triangle} \check{f_{k-1}}
+(|m_0|+d+1-2k) \frac{h_2a_2^2}{\triangle} \check{f_{k}}
\]
for $1 \le k \le d$.
Thus, we have the formula (\ref{d_2fk}).

Let us  prove (\ref{omegafk}).
Put $\Omega$ and $\check{\Omega}$ be the partial differential operators defined by
\[ \Omega = (\partial_1+\partial_2)^2  +2(\lambda_2-2)(\partial_1+\partial_2)
-8 \pi (h_1 a_1^2 \partial_1+h_2 a_2^2 \partial_2)
-4(\lambda_2-1) \]
and 
\begin{align*}
\check{\Omega} =& (\partial_1+\partial_2)^2  +2(|m_0|+d+\lambda_2)(\partial_1+\partial_2)
-8 \pi (h_1 a_1^2 \partial_1+h_2 a_2^2 \partial_2) \\ 
& -8 \pi (h_1 a_1^2 +h_2 a_2^2 )(|m_0|+1) 
   -8 \pi (h_1 a_1^2 -h_2 a_2^2 )(2k-d)
   +(|m_0|+d)(|m_0|+d+2 \lambda_2).
\end{align*}
Then we can check that
\[ \Omega \, f_k =  (\sqrt{h_1}a_1)^{2k+1} (\sqrt{h_2}a_2)^{2d+1-2k} \triangle^{\frac{|m_0|-d}{2}} \, \check{\Omega} \, \check{f_k}.
\]
By interchanging differentiation and integration, we have
\[ \check{\Omega} \, \check{f_k}(a)
= \int_{0}^{1} G \bigl( 2 \pi h_1 a_1^2 t +2 \pi h_2 a_2^2(1-t) \bigr) \, t^{\frac{|m_0|-d-1}{2}+k}
(1-t)^{\frac{|m_0|+d-1}{2}-k} \, dt
\]
with
\[
G(x) = 4 \biggl[ 
x^2 \frac{d^2}{dx^2} -\bigl\{2x-(|m_0|+\lambda_1+1) \bigr\} x \frac{d}{dx} -2(|m_0|+1)x  
+ \frac{(|m_0|+\lambda_1)^2 - \lambda_2^2 }{4} \biggr] F(x). 
\]
Put $F(x)=e^x x^{-(|m_0|+\lambda_1+1)/2} H(x)$. Then we have
\[ 
G(x) = 4 e^x x^{-(|m_0|+\lambda_1+1)/2} \cdot x^2
\biggl[ 
\frac{d^2}{dx^2} + \Bigl\{ -1 + \frac{\lambda_1-|m_0|-1}{x}
+\frac{1-\lambda_2^2}{4x^2} \Bigr\}   
\biggr] H(x).       
\]    
It is known that the differential equation
\[ \biggl[ 
\frac{d^2}{dx^2} + \Bigl\{ -1 + \frac{\lambda_1-|m_0|-1}{x}
+\frac{1-\lambda_2^2}{4x^2} \Bigr\}   
\biggr] H(x) = 0 \]
has two linearly independent solutions:
\[
W_{\frac{\lambda_1-|m_0|-1}{2},\frac{\lambda_2}{2}}(2x), \quad
M_{\frac{\lambda_1-|m_0|-1}{2},\frac{\lambda_2}{2}}(2x).
\] 
Here, $W_{\kappa,\mu}(z)$ and $M_{\kappa,\mu}(z)$
are Whittaker's confluent hypergeometric functions.
(See \cite{WW} Chapter 16 for definition.) 
Therefore, we have (\ref{omegafk}).
It completes the proof.
\end{proof}

\section{Main results} 

We state our main results on an explicit integral formula of
Siegel-Whittaker functions which are of rapid decay 
for the large discrete series representations
of ${\rm Sp}(2,{\R})$. 

\begin{theorem} \label{th1}
Let $\pi = \pi_{\Lambda}$ be a large discrete series representation of $G$ with 
the Harish-Chandra parameter
$\Lambda = (\lambda_1-1,\lambda_2) \in \Xi_{\mathrm{II}}$ and its minimal
$K$-type $\tau=\tau_{(\lambda_1,\lambda_2)}$.  
Let $\xi$ be a unitary character of $N_{S}$ associated with 
a positive definite matrix $H_{\xi} = \bigl(  
\begin{smallmatrix}
h_1 & 0 \\
0 & h_2
\end{smallmatrix} \bigr)$. Put $\eta=\chi_{m_0}  \boxtimes \xi $, as in $($\ref{chi}$)$. 

We assume that 
\[ |m_0| \ge d   \quad   \mbox{  and  } \quad  |m_0|\equiv d \pmod{2}.
\]
Then we have the following:
\begin{enumerate}
\item $\dim_{\C} \mathrm{SW}(\pi, \eta, \tau)^{\text{rap}} = 1$.
\item 
Let $\{ f_k(a) \}_{k=0}^{d}$ be the partially confluent hypergeometric functions,  
defined in Definition \ref{def:fk}.
We consider the following $\C$-linear combinations $\{ g_j(a) \}_{j=0}^{d}$
of elements of $\{ f_k(a) \}_{k=0}^{d}$,  
given by
\begin{equation}
g_j(a) = \sum_{k=0}^{d} x_{jk} \, f_k(a), 
\end{equation} 
where the complex numbers  $\{ x_{jk} \}_{0 \le j,k \le d}$ are given by 
\begin{align} \label{def:x_jk}
x_{jk} =&  (-1)^{j+k} \bigl( \sgn(m_0) \sqrt{-1} \bigr)^{j} \binom{d}{2k-j} \, |m_0|^{\delta(j)}
    \prod_{l=1}^{k} \frac{2l-1}{|m_0|-d+2l-1} \\
          & \times \sum_{r=0}^{[j/2]}           
\binom{[j/2]}{r} \frac{\bigl( d-2r-\delta(j) \bigr)!}{d!}
\prod_{l=1}^{r} \biggl\{ \frac{2k-j-2l+2-\delta(j)}{2k-2l+1} \bigl( |m_0|^2-(d-2l+2)^2 \bigr) \biggr\}
\nonumber
\end{align}
$($$\delta(j) = 1$ if $j$ is odd, otherwise $0$$)$.
Then the function 
\begin{equation}
\phi_{SW}(a) = \sum_{j=0}^{d} \Bigl\{ 
(\sqrt{h_1}a_1)^{\lambda_1-j} (\sqrt{h_2}a_2)^{\lambda_2+j} e^{-2 \pi(h_1a_1^2+h_2a_2^2)}
g_j(a) \Bigr\} \, v_{j} 
\end{equation}
gives a non-zero element in $\mathrm{SW}(\pi, \eta, \tau)^{\text{rap}}|_{A}$
which is unique up to constant multiple.
\end{enumerate}
\end{theorem}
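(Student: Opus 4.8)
The plan is to verify that $\phi_{SW}(a)$ is annihilated by the Miyazaki system (\ref{M1})--(\ref{M3}), and then invoke Proposition \ref{Miyazaki}(ii) to conclude uniqueness and Proposition \ref{prop:fk} together with the moderate-growth/rapid-decay bookkeeping to conclude existence and nonvanishing. Concretely, writing $c_j(a) = (\sqrt{h_1}a_1)^{j}(\sqrt{h_2}a_2)^{-j} \cdot (\sqrt{h_1}a_1)^{-\lambda_1}(\sqrt{h_2}a_2)^{-\lambda_2} g_j(a)$ up to a common factor — i.e. identifying $g_j$ with the ``$c_j$'' of Proposition \ref{Miyazaki} after stripping the monomial $(\sqrt{h_1}a_1)^{\lambda_1-j}(\sqrt{h_2}a_2)^{\lambda_2+j}$ and the Gaussian — the three families of equations (\ref{M1}), (\ref{M2}), (\ref{M3}) become linear relations among $g_{j-1}, g_j, g_{j+1}$ and their $\partial_1,\partial_2$ derivatives. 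Since each $g_j$ is an explicit $\C$-linear combination $\sum_k x_{jk} f_k$, and since Proposition \ref{prop:fk} expresses $\partial_1 f_k$, $\partial_2 f_k$, and the second-order operator $\Omega f_k$ in terms of the $f_k$ themselves (with coefficients rational in $h_ia_i^2$ and $\triangle$), every equation to be checked reduces, after clearing the common denominator $\triangle$, to a finite system of polynomial identities in the coefficients $x_{jk}$. So the proof is, in principle, the single combinatorial assertion that the array (\ref{def:x_jk}) solves that linear system.

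The natural order of steps: (1) Normalize and substitute. Insert the ansatz for $\phi_{SW}$ into each of (\ref{M1}), (\ref{M2}) and compute the action of $\partial_1+j\,h_2a_2^2/\triangle$ etc.\ on $(\sqrt{h_1}a_1)^{\lambda_1-j}(\sqrt{h_2}a_2)^{\lambda_2+j}e^{-2\pi(h_1a_1^2+h_2a_2^2)}g_j$; the monomial and Gaussian factors produce the shifts that match the $+(2k+1)$, $-8\pi h_i a_i^2$ terms appearing in Proposition \ref{prop:fk}, so the equations collapse to relations purely among the $g_j$'s and their $\check{\ }$-level derivatives. (2) Express everything through the $f_k$. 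Use (\ref{d_1fk})--(\ref{d_2fk}) to turn $\partial_1 g_j$, $\partial_2 g_j$ into $\C(h_1a_1^2,h_2a_2^2)$-combinations of $f_k$; since the $f_k$ are linearly independent over rational functions in $a$ (they have distinct leading monomials $(\sqrt{h_1}a_1)^{2k+1}(\sqrt{h_2}a_2)^{2d+1-2k}$), matching coefficients of each $f_k$ gives the required scalar identities. (3) Verify the first-order identities first; these should pin down the $x_{jk}$ up to a $j$-dependent scalar (this is where the product $\prod_{l=1}^{k}\frac{2l-1}{|m_0|-d+2l-1}$ and the binomials $\binom{d}{2k-j}$ come from), i.e.\ reprove uniqueness within the $f$-span. (4) Verify the second-order equation (\ref{M3}) using (\ref{omegafk}); this should be automatically consistent once (\ref{M1}), (\ref{M2}) hold, because $\Omega$ is (up to lower-order terms already handled) the Casimir, but it must still be checked that the particular normalization constants — the $|m_0|^{\delta(j)}$ and the $r$-sum with $\binom{[j/2]}{r}$ and $\prod_l\{\cdots(|m_0|^2-(d-2l+2)^2)\}$ — are the ones that make the inhomogeneous terms cancel. (5) Existence, rapid decay, and nonvanishing: the factor $e^{-2\pi(h_1a_1^2+h_2a_2^2)}$ times the moderate-growth functions $f_k$ (moderate growth was established right after Definition \ref{def:fk}) forces rapid decay as $a_1,a_2\to\infty$, so $\phi_{SW}\in\mathrm{SW}(\pi,\eta,\tau)^{\mathrm{rap}}|_A$ provided it lies in $\mathrm{SW}(\pi,\eta,\tau)|_A$, which follows from steps (1)--(4) plus Proposition \ref{Miyazaki}(i); nonvanishing follows because the leading coefficient $g_0$ (or the top $g_d$) is a single nonzero multiple of $f_0$ (resp.\ $f_d$), and $f_k$ is manifestly not identically zero since its integrand is positive near $t=1$ (resp.\ $t=0$) when $a_1=a_2$; finally $\dim\le 1$ from Proposition \ref{Miyazaki}(ii) upgrades to $\dim=1$.

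The main obstacle is step (3)--(4): proving that the closed-form array (\ref{def:x_jk}) actually satisfies the recurrences. The first-order relations (\ref{d_1fk})--(\ref{d_2fk}) give, after substitution, three-term recurrences in $k$ with coefficients linear in $|m_0|$ and $d$, and one must show the stated double sum (an alternating sum over $r$ of products involving $|m_0|^2-(d-2l+2)^2$) telescopes correctly against them; the parity hypothesis $|m_0|\equiv d\pmod 2$ and the inequality $|m_0|\ge d$ enter precisely to keep all the exponents $\alpha_k,\beta_k$ positive and the binomials $\binom{d}{2k-j}$ well-defined. I expect this to be handled by: first deriving the recurrence that the coefficients \emph{must} satisfy from (\ref{M1})--(\ref{M3}) + Proposition \ref{prop:fk}; then checking by induction on $j$ (splitting into $j$ even/odd via $\delta(j)$) that (\ref{def:x_jk}) satisfies it, where the $r$-sum is recognized as the solution of an inhomogeneous one-step recurrence in $j$ built from the $j-2\to j$ transition. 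The $\sqrt{-1}$-powers $(\sgn(m_0)\sqrt{-1})^j$ are the mechanism that converts the real two-term first-order system into something compatible with the $SO(2)$-character $\chi_{m_0}$, so tracking signs/phases carefully will be essential; that bookkeeping, rather than any deep idea, is where the real work lies.
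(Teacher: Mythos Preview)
Your overall strategy---substitute $g_j=\sum_k x_{jk}f_k$, use Proposition~\ref{prop:fk} to turn the Miyazaki system into scalar recurrences on the $x_{jk}$, verify those recurrences by induction on $j$, and then read off rapid decay and nonvanishing---matches the paper's. Steps (1)--(3) and (5) are essentially what the paper does in Propositions~\ref{prop:gj} and~\ref{prop:x_jk}: from (\ref{M1}) and (\ref{M2}) one extracts exactly two families of three-term recurrences (the paper's (\ref{d-1}), (\ref{d-2})) plus boundary conditions $x_{j,0}=0$ for $j\ge1$ and $x_{j,d}=0$ for $j\le d-1$, and then proves (\ref{def:x_jk}) satisfies them by induction on $j$ with the even/odd split you anticipate.

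The gap is in your step (4). The equation (\ref{M3}) is \emph{not} handled by ``checking that the normalization constants make the inhomogeneous terms cancel'' in a coefficient-of-$f_k$ sense: the $8\pi h_ia_i^2$ terms in (\ref{M3}) mean that after applying (\ref{d_1fk})--(\ref{d_2fk}) you do not get a constant-coefficient linear relation among the $f_k$, so matching coefficients does not close. The paper instead combines (\ref{M1}), (\ref{M2}), (\ref{M3}) algebraically to isolate an equivalent equation (\ref{M4}); then, using (\ref{omegafk}) together with the already-verified (\ref{M1}), (\ref{M2}), one shows that the left side $F_j(a)$ of (\ref{M4}) satisfies $\partial_1\bigl(F_j/(h_1a_1^2)\bigr)=\partial_2\bigl(F_j/(h_2a_2^2)\bigr)=0$, hence $F_j=\beta_j\,h_1a_1^2 h_2a_2^2$ for some constant $\beta_j$. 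To kill $\beta_j$ one restricts to the diagonal $h_1a_1^2=h_2a_2^2$. When $|m_0|>d$ this is immediate because the factor $\triangle^{(|m_0|-d)/2}$ in $f_k$ vanishes there; when $|m_0|=d$ it requires evaluating $g_j|_{L}$ explicitly via a binomial identity $\sum_{k=0}^{d}(-1)^k\binom{2k}{j}\binom{2d-2k}{d-j}\binom{d}{k}=(-1)^j2^d\binom{d}{j}$ and observing that the resulting expression is $(\sgn(m_0)\sqrt{-1})^j$ times a $j$-independent function, so $g_{j-1}+g_{j+1}$ and $(\partial_1+\partial_2)(g_{j-1}+g_{j+1})$ vanish on $L$. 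This integration-constant-then-diagonal-evaluation maneuver is the one genuinely new idea you are missing; without it, (\ref{M3}) does not follow from your outline.
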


\section{Proof of main results} 

We claim that the coefficient functions $c_0(a),c_1(a),\dots,c_d(a)$ appearing in 
the Siegel-Whittaker function
$\phi_{SW}(a)$, in Proposition \ref{Miyazaki},  
are $\C$-linear combination of the confluent hypergeometric functions 
$f_0(a), f_1(a),\dots, f_d(a)$ defined in Definition \ref{def:fk}.

\begin{proposition} \label{prop:gj}
We assume that 
\[ |m_0| \ge d   \quad   \mbox{  and  } \quad  |m_0|\equiv d \pmod{2}.
\]
Let $\{ x_{jk} \}_{0 \le j,k \le d}$ be a sequence of complex numbers, which satisfy
\begin{equation} \label{d-1}
j \, x_{j-1,k} + \sqrt{-1} m_0 \, x_{j,k} +(d-2k+j+1) \, x_{j+1,k}
-(|m_0|-d+2k+1) \, x_{j+1,k+1} = 0
\end{equation}
$(0 \le j \le d-1, \, 0 \le k \le d)$, 
\begin{equation} \label{d-2}
(j-2k-1) \, x_{j-1,k} +(|m_0|+d-2k+1) \, x_{j-1,k-1}
+ \sqrt{-1} m_0 \, x_{j,k} - (d-j) \, x_{j+1,k}  = 0
\end{equation} 
$(1 \le j \le d, \, 0 \le k \le d)$, 
\begin{equation} \label{d-3}
x_{1,0}=x_{2,0}=\dots =x_{d,0}=0, 
\end{equation}
and
\begin{equation} \label{d-4}
x_{0,d}=x_{1,d}=\dots =x_{d-1,d}=0. 
\end{equation}
For $0 \le j \le d$, define
\[ g_j(a) = \sum_{k=0}^{d} x_{jk} \, f_k(a), \] 
then $\{ g_j(a) \}_{j=0}^{d}$ is a smooth solution of the system 
of the difference-differential equations
(\ref{M1}), (\ref{M2}) and (\ref{M3}) in Proposition \ref{Miyazaki}.
\end{proposition}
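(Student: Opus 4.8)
The strategy is to substitute $g_j(a) = \sum_{k=0}^{d} x_{jk}\, f_k(a)$ into each of Miyazaki's equations (\ref{M1}), (\ref{M2}), (\ref{M3}) and to reduce the verification to the algebraic relations (\ref{d-1})--(\ref{d-4}) together with the differential-difference identities (\ref{d_1fk}), (\ref{d_2fk}), (\ref{omegafk}) of Proposition \ref{prop:fk}. Concretely, I would first rewrite (\ref{M1}) and (\ref{M2}) in terms of the $f_k$: applying $\partial_1$ to $g_{j-1}$ via (\ref{d_1fk}) and $\partial_2$ to $g_{j+1}$ via (\ref{d_2fk}) converts every derivative into a $\C$-linear combination of the $f_k$ with coefficients that are rational in $h_1a_1^2, h_2a_2^2$; all such coefficients are proportional to $h_2a_2^2/\triangle$ in (\ref{M1}) and to $h_1a_1^2/\triangle$ in (\ref{M2}). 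After clearing the common factor $h_ia_i^2/\triangle$ and collecting the coefficient of each $f_k$, equation (\ref{M1}) becomes exactly the family of scalar identities (\ref{d-1}) (with an index shift matching $\partial_1 f_{k}$ to $f_k$ and $f_{k+1}$), and similarly (\ref{M2}) becomes (\ref{d-2}); the boundary conditions (\ref{d-3}), (\ref{d-4}) are precisely what is needed so that the ranges $1\le j\le d$ in (\ref{M1}) and $0\le j\le d-1$ in (\ref{M2}) are respected when the index $k$ runs to the ends $0$ and $d$ (where $f_{-1}$ and $f_{d+1}$ are not defined).

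Next I would treat the second-order equation (\ref{M3}). The key observation is that the operator appearing there is, up to lower-order $\partial_1,\partial_2$ terms that are already controlled by (\ref{M1})--(\ref{M2}), a multiple of the scalar Casimir-type operator $\Omega$ of Proposition \ref{prop:fk}. So the plan is: (a) use (\ref{M1}) and (\ref{M2}) — which we have just established for the $g_j$ — to eliminate the single first-order derivatives $\partial_1 c_{j+1}$ and $\partial_2 c_{j-1}$ appearing in (\ref{M3}), rewriting (\ref{M3}) as an identity involving only $c_{j-1}, c_j, c_{j+1}$ and the operator $\partial_1+\partial_2$ (and $h_ia_i^2$-multiplication); (b) recognize the resulting combination as $\mathrm{(rational\ factor)}\times \Omega$ applied to the relevant $g$'s, plus purely algebraic terms; (c) invoke (\ref{omegafk}), which says $\Omega f_k = 0$ for all $k$, hence $\Omega g_j = 0$; (d) check that the leftover algebraic terms vanish as a consequence of (\ref{d-1})--(\ref{d-4}). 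I would present this as a computation of $\Omega g_j$ on one side and a rearrangement of (\ref{M3}) on the other, matching them term by term.

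\textbf{Main obstacle.} The routine but delicate part is the bookkeeping in step (a)--(d) for (\ref{M3}): the equation mixes three consecutive $c_j$'s with coefficients $h_1a_1^2$, $h_2a_2^2$, and $h_1a_1^2h_2a_2^2/\triangle$, and after substituting the $f_k$-expansions and the identities (\ref{d_1fk}), (\ref{d_2fk}) one must carefully track how the index shifts $k\mapsto k\pm1$ interact with the shifts $j\mapsto j\pm1$, making sure the $\triangle$ in the denominator genuinely cancels and that no spurious $f_k$ with out-of-range index survives. I expect that after clearing denominators the whole identity reduces to a polynomial identity in the $x_{jk}$ that is a $\Z$-linear consequence of (\ref{d-1}) and (\ref{d-2}) (together with their index-shifted versions); verifying this consequence is elementary linear algebra but must be done with care. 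The first-order equations (\ref{M1}), (\ref{M2}) should be essentially immediate once the reduction is set up. A secondary point to address is smoothness of the $g_j$, which is automatic since each $f_k$ is smooth on $A$ by the remark following Definition \ref{def:fk}.
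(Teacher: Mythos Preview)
Your plan for (\ref{M1}) and (\ref{M2}) is essentially the paper's, though the correspondence is reversed: substituting (\ref{d_1fk}) into $\partial_1 g_{j-1}$ and reindexing yields (\ref{d-2}), with (\ref{d-4}) killing the stray $f_{d+1}$-coefficient; likewise (\ref{d_2fk}) in $\partial_2 g_{j+1}$ gives (\ref{d-1}), with (\ref{d-3}) handling the $k=0$ boundary. This part is fine.

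The gap is in (\ref{M3}). Your step (b)---writing (\ref{M3}) as a rational multiple of $\Omega$ applied to some $g_j$ plus an algebraic remainder---cannot work as stated: (\ref{M3}) is first order in each $c_{j\pm1}$ while $\Omega$ is second order, and the $8\pi h_i a_i^2$-terms in (\ref{M3}) are not produced by the shift identities (\ref{d_1fk}), (\ref{d_2fk}). What the paper actually does is combine (\ref{M1})--(\ref{M3}) to extract a simpler first-order relation (\ref{M4}) with left side $F_j(a)$; from the already-verified (\ref{M1}), (\ref{M2}) one gets (\ref{M5}), and (\ref{omegafk}) gives (\ref{M6}) (which \emph{is} $\Omega c_j=0$). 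But (\ref{M5}) and (\ref{M6}) together only force $\partial_i\bigl(F_j/(h_ia_i^2)\bigr)=0$, i.e.\ $F_j=\beta_j\,h_1a_1^2h_2a_2^2$ for some constants $\beta_j$. Killing these integration constants is \emph{not} a direct linear consequence of (\ref{d-1})--(\ref{d-4}) in the way you expect: the paper restricts to the diagonal $L=\{h_1a_1^2=h_2a_2^2\}$, where for $|m_0|>d$ the factor $\triangle^{(|m_0|-d)/2}$ in each $f_k$ makes $F_j|_L$ vanish, while for $|m_0|=d$ one must invoke the explicit closed form of $x_{jk}$ from Proposition~\ref{prop:x_jk} together with a nontrivial binomial identity to see that $(g_{j-1}+g_{j+1})|_L=0$. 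Your outline does not anticipate this integration-constant obstruction or the diagonal-evaluation argument needed to resolve it.
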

\begin{proof}
By using Proposition \ref{prop:fk}, we see that
\begin{align*}
&  \frac{\triangle}{h_2a_2^2} \Bigl[ \partial_1 + j \frac{h_2a_2^2}{\triangle} \Bigr] g_{j-1}(a) \\
& = \sum_{k=0}^{d} (j-2k-1) x_{j-1,k} \, f_k
    +\sum_{k=0}^{d} (|m_0|+d-2k-1) x_{j-1,k} \, f_{k+1}.
\end{align*}
Therefore, we have
\begin{align*}
&  \frac{\triangle}{h_2a_2^2} \Bigl[ \partial_1 +j  \frac{h_2a_2^2}{\triangle} \Bigr] g_{j-1}(a)  + \sqrt{-1} m_0 g_j(a)  -(d-j) g_{j+1}(a)   \\
& =  \sum_{k=0}^{d} (j-2k-1) x_{j-1,k} \, f_k + \sum_{k=1}^{d} (|m_0|+d-2k+1) x_{j-1,k-1} \, f_{k} \\
& \quad  + \sqrt{-1} m_0 \sum_{k=0}^{d} x_{j,k} \, f_k -(d-j) \sum_{k=0}^{d} x_{j+1,k} \, f_k \\ 
& = 0 \quad (1 \le j \le d).
\end{align*}
This is the desired (\ref{M1}) for $\{ g_j(a)\}_{j=0}^{d}$. In the above calculation, we used the relations (\ref{d-2}) and (\ref{d-4})
on $\{ x_{jk} \}_{0 \le j, k \le d}$. 
Again, by using Proposition \ref{prop:fk}, we see that
\begin{align*}
&  \frac{\triangle}{h_1a_1^2} \Bigl[ \partial_2 -(d-j) \frac{h_1a_1^2}{\triangle} \Bigr] g_{j+1}(a) \\
& = \sum_{k=0}^{d} (d-2k+j+1) x_{j+1,k} \, f_k
    -\sum_{k=0}^{d} (|m_0|-d+2k-1) x_{j+1,k} \, f_{k-1}.
\end{align*}
Therefore, we have
\begin{align*}
&  j g_{j-1}(a)  + \sqrt{-1} m_0 g_j(a) +\frac{\triangle}{h_1a_1^2} \Bigl[ \partial_2 -(d-j) \frac{h_1a_1^2}{\triangle} \Bigr] g_{j+1}(a) \\
& =  j \sum_{k=0}^{d} x_{j-1,k} \, f_k + \sqrt{-1} m_0 \sum_{k=0}^{d} x_{j,k} \, f_k  +  \sum_{k=0}^{d} (d-2k+j+1) x_{j+1,k} \, f_k \\
&   \quad  -\sum_{k=0}^{d-1} (|m_0|-d+2k+1) x_{j+1,k+1} \, f_{k} \\
& = 0 \quad (0 \le j \le d-1). 
\end{align*}
This is the desired (\ref{M2}) for $\{ g_j(a)\}_{j=0}^{d}$. In the above calculation, we used the relations (\ref{d-1}) and (\ref{d-3})
on $\{ x_{jk} \}_{0 \le j, k \le d}$. 

By considering $(\ref{M1}) \times h_1a_1^2 + (\ref{M2}) \times h_2a_2^2 + (\ref{M3})$, we have
\begin{equation} \label{M4}
\begin{split} 
&h_1a_1^2[(\partial_1+\partial_2) -8 \pi h_2a_2^2 +2 \lambda_2-2 ] c_{j-1} (a) \\ 
&+ h_2a_2^2[(\partial_1+\partial_2) -8 \pi h_1a_1^2 +2 \lambda_2-2 ] c_{j+1} (a)=0  \quad (1 \le j \le d-1). 
\end{split}
\end{equation}
By considering $(\ref{M1}) \times h_1a_1^2 - (\ref{M2}) \times h_2a_2^2 $, we have
\begin{equation}
h_1a_1^2 \partial_1 \, c_{j-1}(a)  
-   h_2a_2^2 \partial_2 \, c_{j+1}(a) =0  \quad (1 \le j \le d-1). \label{M5} 
\end{equation}
Operating $\partial_2 (h_2a_2^2)^{-1}$ on both hand sides of $(\ref{M4})$, we have
\begin{equation} \label{M42}
\begin{split} 
& \partial_2  \frac{h_1a_1^2}{h_2a_2^2} [(\partial_1+\partial_2) -8 \pi h_2a_2^2 +2 \lambda_2-2 ] c_{j-1} (a) \\ 
&+ [(\partial_1+\partial_2) -8 \pi h_1a_1^2 +2 \lambda_2-2 ] \partial_2 \, c_{j+1} (a)=0  \quad (1 \le j \le d-1). 
\end{split}
\end{equation}
Combining (\ref{M42}) and (\ref{M5}), we have, for $0 \le j \le d-2$,   
\begin{equation} \label{M6}
\bigl[ (\partial_1+\partial_2)^2  +2(\lambda_2-2)(\partial_1+\partial_2)\partial_2
-8 \pi (h_1 a_1^2 \partial_1+h_2 a_2^2 \partial_2)
-4(\lambda_2-1) \bigr] c_j(a) = 0. \\
\end{equation}
Operating $\partial_1 (h_1a_1^2)^{-1}$ on both hand sides of $(\ref{M4})$ and combining with (\ref{M5}), 
we have (\ref{M6}) for $2 \le j \le d$. As a result, we have (\ref{M6}) for $0 \le j \le d$. 

Lastly we prove that $\{g_j(a)\}_{j=0}^{d}$ satisfy $(\ref{M3})$.  
By Proposition \ref{prop:fk}, $f_0(a), f_1(a),\dots, f_d(a)$ satisfy the same differential equation (\ref{M6}),
therefore their $\C$-linear combinations $\{g_j(a)\}_{j=0}^{d}$ also satisfy (\ref{M6}).
Since $\{ g_j(a) \}_{j=0}^{d}$ satisfy (\ref{M5}) and (\ref{M6}), we see that
\begin{equation} \label{Fj}
\partial_2 \frac{F_j(a)}{h_2a_2^2} = \partial_1 \frac{F_j(a)}{h_1a_1^2} = 0 \quad (1 \le j \le d-1),
\end{equation}
where we put 
\begin{align*}
F_j(a)  =&  h_1a_1^2[(\partial_1+\partial_2) -8 \pi h_2a_2^2 +2 \lambda_2-2 ] g_{j-1} (a) \\
&+ h_2a_2^2[(\partial_1+\partial_2) -8 \pi h_1a_1^2 +2 \lambda_2-2 ] g_{j+1} (a). 
\end{align*}
By (\ref{Fj}), there exist constants $\beta_j $ $(1\le j \le d-1)$ such that
\[ F_j(a) = \beta_j \, h_1a_1^2 h_2a_2^2.
\]

Let us determine $\beta_j$. For $y>0$, define $L=\{ (a_1,a_2) \in A \mid h_1a_1^2= h_2a_2^2 = y  \}$.
We show that $(F_{j}|_{L})(y)=0$ to deduce $\beta_j = 0$. 
We can verify that
\begin{align*}
(\partial_1+\partial_2) \, f_k 
& = (\sqrt{h_1}a_1)^{2k+1} (\sqrt{h_2}a_2)^{2d+1-2k} \triangle^{\frac{|m_0|-d}{2}}
\bigl[ (\partial_1+\partial_2) +(d+|m_0|+2) \bigr]  \\
& \quad \times \int_{0}^{1}F \bigl( 2 \pi h_1 a_1^2 t +2 \pi h_2 a_2^2(1-t) \bigr) \, t^{\frac{|m_0|-d-1}{2}+k}
(1-t)^{\frac{|m_0|+d-1}{2}-k} \, dt \\
& = (\sqrt{h_1}a_1)^{2k+1} (\sqrt{h_2}a_2)^{2d+1-2k} \triangle^{\frac{|m_0|-d}{2}} \\
& \quad \times \int_{0}^{1}F^{*} \bigl( 2 \pi h_1 a_1^2 t +2 \pi h_2 a_2^2(1-t) \bigr) \, t^{\frac{|m_0|-d-1}{2}+k}
(1-t)^{\frac{|m_0|+d-1}{2}-k} \, dt.
\end{align*}
Here, 
\[ F^{*}(x) = \biggl( \Bigl( 
2x \frac{d}{dx} +(d+|m_0|+2) 
\Bigr) F \biggr)(x).
\]
There are two cases:
\begin{enumerate}
\item If $|m_0|-d \ge 2$, then both $(\partial_1+\partial_2)  f_k$ and $f_k$ have zeros at
$\{ h_1a_1^2-h_2a_2^2 = 0\}$. Hence, both $(\partial_1+\partial_2) g_j$ and $g_j$ have zeros at
there. Therefore we have $(F_{j}|_{L})(y)=0$. 
\item Suppose that $|m_0|=d$, then we have
\[  \bigl( (\partial_1+\partial_2) \, f_k \bigr) \big|_{L}(y)
= y^{d+1} F^{*}(2 \pi y) \, B \Bigl( k+\frac{1}{2}, d-k+\frac{1}{2} \Bigr).
\]
Here, 
\[ B(\alpha, \beta) = \int_{0}^{1} t^{\alpha-1}(1-t)^{\beta-1} \, dt
  = \frac{\Gamma(\alpha) \, \Gamma(\beta)}{\Gamma(\alpha+\beta)}
\]
is the Beta function.
Then we can verify that
\begin{equation} \label{eq:F*}
\begin{split}
 & \bigl( (\partial_1+\partial_2) \, g_j \bigr) \big|_{L}(y) 
 = \biggl( \sum_{k=0}^{d} x_{jk} \, B \Bigl( k+\frac{1}{2}, d-k+\frac{1}{2} \Bigr) \biggr) y^{d+1} F^{*}(2 \pi y)  \\
 & = \bigl( \sgn(m_0) \sqrt{-1} \bigr)^{j} \, 2^{-d} \pi \, 
 y^{d+1} F^{*}(2 \pi y).
 \end{split}  
\end{equation}
To derive (\ref{eq:F*}), we used (\ref{xjk}) in Proposition \ref{prop:x_jk}: (We will prove later.)
\[ x_{jk} = (-1)^{j+k} \bigl( \sgn(m_0) \sqrt{-1} \bigr)^{j} \binom{d}{2k-j}  \quad (\mbox{when } |m_0|=d), 
\]
under the condition $x_{0,0}=1$, and the equality:
\begin{equation} \label{eq:binom}
\sum_{k=0}^{d} (-1)^k \binom{2k}{j} \binom{2d-2k}{d-j} \binom{d}{k}
  =  (-1)^j 2^d \binom{d}{j}. 
\end{equation} 
Let $S_{d,j}$ be the left hand side of (\ref{eq:binom}). We remark that the above equality (\ref{eq:binom}) 
is proved by showing that
\[ S_{d,j} = -\frac{2d}{j} \, S_{d-1,j-1}  \quad (j \ge 1)
\mbox{ and } S_{d,0} =2^d. \]
See p.620 no. 63
in \cite{MF} for the equality $S_{d,0} =2^d$.

Similarly, we also obtain  
\[ g_j \big|_{L}(y) 
=  \bigl( \sgn(m_0) \sqrt{-1} \bigr)^{j} \, 2^{-d} \pi \, 
 y^{d+1} F(2 \pi y).   \]
Therefore, we have
\[  \bigl( (\partial_1+\partial_2) \, (g_{j-1} + g_{j+1}) \bigr) \big|_{L}(y)
   =   (g_{j-1}+g_{j+1} ) \big|_{L}(y) = 0,
\] 
for $1 \le j \le d-1$. Therefore we have $(F_{j}|_{L})(y)=0$. 
\end{enumerate}
In any cases, $\{ g_j(a) \}_{j=0}^{d}$ satisfy (\ref{M4}). 
Hence, $\{ g_j(a) \}_{j=0}^{d}$ satisfy (\ref{M3}) and it completes the proof.
\end{proof}

\begin{proposition} \label{prop:x_jk}
We assume that 
\[ |m_0| \ge d   \quad   \mbox{  and  } \quad  |m_0|\equiv d \pmod{2}.
\]
Let $\{ x_{jk} \}_{0 \le j,k \le d}$ be a sequence of complex numbers, which satisfy
\begin{equation} \label{x-1}
j \, x_{j-1,k} + \sqrt{-1} m_0 \, x_{j,k} +(d-2k+j+1) \, x_{j+1,k}
-(|m_0|-d+2k+1) \, x_{j+1,k+1} = 0
\end{equation}
$(0 \le j \le d-1, \, 0 \le k \le d)$, 
\begin{equation} \label{x-2}
(j-2k-1) \, x_{j-1,k} +(|m_0|+d-2k+1) \, x_{j-1,k-1}
+ \sqrt{-1} m_0 \, x_{j,k} - (d-j) \, x_{j+1,k}  = 0
\end{equation} 
$(1 \le j \le d, \, 0 \le k \le d)$, 
\begin{equation} \label{x-3}
x_{1,0}=x_{2,0}=\dots =x_{d,0}=0, 
\end{equation}
and
\begin{equation} \label{x-4}
x_{0,d}=x_{1,d}=\dots =x_{d-1,d}=0. 
\end{equation}
Then the sequence $\{ x_{jk} \}_{0 \le j,k \le d}$ is uniquely determined and 
given by, up to a constant multiple, 
\begin{equation} \label{xjk}
\begin{split}
x_{jk} =&  (-1)^{j+k} \bigl( \sgn(m_0) \sqrt{-1} \bigr)^{j} \binom{d}{2k-j} \, |m_0|^{\delta(j)}
    \prod_{l=1}^{k} \frac{2l-1}{|m_0|-d+2l-1} \\
          & \times \sum_{r=0}^{[j/2]}           
\binom{[j/2]}{r} \frac{\bigl( d-2r-\delta(j) \bigr)!}{d!}
\prod_{l=1}^{r} \biggl\{ \frac{2k-j-2l+2-\delta(j)}{2k-2l+1} \bigl( |m_0|^2-(d-2l+2)^2 \bigr) \biggr\}, 
\end{split}
\end{equation}
where, $\delta(j) = 1$ if $j$ is odd, otherwise $0$. 
\end{proposition}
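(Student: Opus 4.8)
The plan is to treat the relations (\ref{x-1})--(\ref{x-4}) as a finite linear recursion in the variables $\{x_{jk}\}$ and to show first that the solution space is at most one-dimensional, then to verify that the explicit formula (\ref{xjk}) does give a solution (so the dimension is exactly one). For the uniqueness part, the key observation is that (\ref{x-1}) with $k$ replaced by $k$ expresses $x_{j+1,k+1}$ in terms of $x_{j-1,k}$, $x_{j,k}$ and $x_{j+1,k}$; together with the boundary vanishing (\ref{x-3}), (\ref{x-4}), this should let me propagate from a single normalization, say $x_{0,0}=1$. Concretely, I would order the unknowns lexicographically (or by the value $2k-j$, which is the natural ``weight''): the binomial coefficient $\binom{d}{2k-j}$ in (\ref{xjk}) already signals that $x_{jk}=0$ unless $0\le 2k-j\le d$, so the effective index set is a band. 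First I would show that (\ref{x-1}) and (\ref{x-2}), read as recursions raising $j$ by one, determine the whole row $j+1$ from rows $j-1$ and $j$ once one knows enough boundary values; the two boundary conditions (\ref{x-3}) and (\ref{x-4}) pin down exactly the free constants that arise at the edges $k=0$ and $k=d$ of each row. I expect that carefully bookkeeping which of (\ref{x-1}), (\ref{x-2}) to use at the two ends of each row is the main subtlety: at $k=0$ one has $x_{j+1,1}$ determined by (\ref{x-1}) at $j,k=0$ while $x_{j+1,0}=0$ is forced, and at $k=d$ one uses (\ref{x-2}) at $k=d$ together with $x_{j-1,d}=0$; in the interior both recursions are available and must be checked to be consistent.

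Having established that the dimension is at most one, I would verify directly that the closed form (\ref{xjk}) satisfies all four families of relations. Rather than expanding the messy product-and-sum, I would split $x_{jk}$ as a product of an ``easy'' factor
\[ P_{jk} = (-1)^{j+k}\bigl(\sgn(m_0)\sqrt{-1}\bigr)^{j}\binom{d}{2k-j}\,|m_0|^{\delta(j)}\prod_{l=1}^{k}\frac{2l-1}{|m_0|-d+2l-1} \]
and the inner sum over $r$; the $r$-sum has the shape of a finite hypergeometric-type sum, and one can hope that the recursions (\ref{x-1}), (\ref{x-2}) translate, after dividing out $P_{jk}$, into a Pascal-type identity among the $\binom{[j/2]}{r}$ together with a telescoping of the products $\prod_{l=1}^{r}\{\cdots\}$. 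The parity cases $j$ even versus $j$ odd should be handled separately because of the factor $|m_0|^{\delta(j)}$ and the shift $d-2r-\delta(j)$; in each parity the inner sum for row $j$, $j+1$, $j-1$ differs by at most a reindexing $r\mapsto r$ or $r\mapsto r-1$, and the claimed relation becomes a binomial identity that one can prove by the snake-oil/generating-function method or by the standard recursion $\binom{n}{r}=\binom{n-1}{r}+\binom{n-1}{r-1}$.

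The hard part, I expect, is not the verification of a single identity but organizing it: there are essentially four relation-families and two parities, and the explicit factor in front of $f_k$ makes the interior relations (\ref{x-1}), (\ref{x-2}) into nontrivial combinatorial statements. I would therefore structure the proof as follows. Step one: reduce to $|m_0|=d$ as a sanity check, where (\ref{xjk}) collapses to $x_{jk}=(-1)^{j+k}(\sgn(m_0)\sqrt{-1})^{j}\binom{d}{2k-j}$, and confirm the recursions and boundary conditions hold there using elementary Pascal identities --- this both fixes the normalization $x_{0,0}=1$ and provides a template. Step two: for general $|m_0|\ge d$ with $|m_0|\equiv d\pmod 2$, prove uniqueness by the propagation argument above. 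Step three: plug (\ref{xjk}) into (\ref{x-1}) and (\ref{x-2}); after cancelling the common factor one is left, in the even-$j$ case, with an identity of the form $\sum_r \binom{[j/2]}{r}(\text{ratio of products})=\sum_r\binom{[(j\pm1)/2]}{r}(\cdots)$, which I would verify by matching coefficients of like powers of $|m_0|^2$, since both sides are polynomials in $|m_0|^2$ of bounded degree. Step four: check the boundary vanishing (\ref{x-3}), (\ref{x-4}) directly from the binomial factor $\binom{d}{2k-j}$, which is zero exactly when $2k-j<0$ or $2k-j>d$; at $k=0$, $j\ge1$ this gives $\binom{d}{-j}=0$, and at $k=d$, $j\le d-1$ this gives $\binom{d}{2d-j}=0$ since $2d-j>d$. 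The main obstacle is thus the bookkeeping in step three, and I would prove the needed binomial identities as separate lemmas (or cite \cite{MF}-style tables, as the authors already do for $S_{d,0}=2^d$) to keep the argument readable.
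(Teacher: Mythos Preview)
Your plan differs from the paper's in a basic way: you propose to argue uniqueness abstractly and then verify by direct substitution that the closed form (\ref{xjk}) satisfies (\ref{x-1})--(\ref{x-4}), whereas the paper \emph{derives} the formula constructively. Concretely, the paper first eliminates $x_{1,k},x_{1,k\pm1},x_{2,k},x_{2,k\pm1}$ from seven instances of (\ref{x-1}) (at $j=0,1$) and (\ref{x-2}) (at $j=1$) to obtain a single four-term recursion in $k$ for the zeroth row $\{x_{0,k}\}$, solves that recursion to get (\ref{x0k}), recovers $x_{1,k}$ from (\ref{x-1}) at $j=0$, and then proceeds by induction on $j$: the relation (\ref{x-2}) at $j=2p$ expresses $x_{2p+1,k}$ in terms of $x_{2p-1,k}, x_{2p-1,k-1}, x_{2p,k}$, and the paper carries out the resulting manipulation of the $r$-sums explicitly (and similarly for the even step). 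Your verification approach is legitimate for the existence half --- the boundary checks (\ref{x-3}), (\ref{x-4}) are indeed immediate from the binomial factor $\binom{d}{2k-j}$, and separating $P_{jk}$ from the $r$-sum is the natural first move. The trade-off is that you must then prove two families of combinatorial identities (one per parity of $j$) from scratch, whereas the paper's induction, though messy, reduces each step to a routine simplification of an already-known expression.

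The gap in your plan is the uniqueness argument. You correctly observe that (\ref{x-1}) together with (\ref{x-3}) lets you propagate row by row, so the solution space of (\ref{x-1}) and (\ref{x-3}) alone has dimension at most $d+1$, parametrised by $\{x_{0,k}\}_{k=0}^{d}$. But your sketch does not explain why $x_{0,0}$ alone determines the rest of row $0$. This is exactly where the paper's elimination step is doing real work: the four-term recursion for $\{x_{0,k}\}$ is a genuine consequence of the \emph{compatibility} of (\ref{x-1}) and (\ref{x-2}) at small $j$, and it is this recursion (together with boundary conventions) that cuts the freedom in row $0$ down to a single constant. Your remark that ``in the interior both recursions are available and must be checked to be consistent'' locates the right phenomenon but treats it as a side condition to be verified after the fact, rather than as the mechanism that pins down row $0$. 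If you want to keep the verification strategy, you will still need either an elimination like the paper's to get a determined recursion for $\{x_{0,k}\}$, or a direct argument that the $d$ conditions $x_{0,d}=\cdots=x_{d-1,d}=0$ from (\ref{x-4}), viewed through the propagation as linear functionals on row $0$, are linearly independent --- and the latter is not obviously easier than the former.
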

\begin{proof}
We may assume that $x_{0,0}=1$. 
Let us write down (\ref{x-1}) with $j=0,1$ and $k$ replaced by $k-1,k$,
and (\ref{x-2}) with $j=1$ and $k$ replaced by $k-1,k,k+1$.
Then we have the following system of seven difference equations:
\begin{equation} \label{eq:seven}
\begin{split} 
& \sqrt{-1} m_0 \, x_{0,k-1} +(d-2k+3) \, x_{1,k-1}
-(|m_0|-d+2k-1) \, x_{1,k} = 0, \\
& \sqrt{-1} m_0 \, x_{0,k} +(d-2k+1) \, x_{1,k}
-(|m_0|-d+2k+1) \, x_{1,k+1} = 0, \\
& x_{0,k-1} + \sqrt{-1} m_0 \, x_{1,k-1} +(d-2k+4) \, x_{2,k-1}
-(|m_0|-d+2k-1) \, x_{2,k} = 0, \\
& x_{0,k} + \sqrt{-1} m_0 \, x_{1,k} +(d-2k+2) \, x_{2,k}
-(|m_0|-d+2k+1) \, x_{2,k+1} = 0, \\
&-(2k-2) \, x_{0,k-1} +(|m_0|+d-2k+3) \, x_{0,k-2}
+ \sqrt{-1} m_0 \, x_{1,k-1} - (d-1) \, x_{2,k-1}  = 0, \\
&-2k \, x_{0,k} +(|m_0|+d-2k+1) \, x_{0,k-1}
+ \sqrt{-1} m_0 \, x_{1,k} - (d-1) \, x_{2,k}  = 0, \\
&-(2k+2) \, x_{0,k+1} +(|m_0|+d-2k-1) \, x_{0,k}
+ \sqrt{-1} m_0 \, x_{1,k+1} - (d-1) \, x_{2,k+1}  = 0.
\end{split}
\end{equation} 
We eliminate $x_{j,k}, \, x_{j,k\pm1}$ $(j=1,2)$ in the above system. Then we have
\begin{equation} \label{eq:x0k}
\begin{split}
& -2(k+1)(|m_0|-d+2k+1)(|m_0|-d+2k-1) \, x_{0,k+1} \\
& +\bigl\{ 2k(4d-6k+3)-d(d-1) \bigr\} (|m_0|-d+2k-1) \, x_{0,k} \\
& +\Bigl[ 2(d-2k+2) \bigl\{ d(d-1) -(2k-2)(2d-2k+3) \bigr\} \\
& \quad  +(2k-2)(|m_0|+d-2k+1)(|m_0|-d+2k-1) \Bigr] \, x_{0,k-1} \\
& +(d-2k+4)(d-2k+3)(|m_0|+d-2k+3) \, x_{0,k-2} \\
& =0.
\end{split}
\end{equation} 
By solving the difference equation (\ref{eq:x0k}) for $\{x_{0,k}\}$ with $x_{0,0}=1$,
we obtain 
\begin{equation} \label{x0k}
x_{0,k} = (-1)^k \binom{d}{2k} 
    \prod_{l=1}^{k} \frac{2l-1}{|m_0|-d+2l-1}.
\end{equation} 

From the second equation of (\ref{eq:seven}), (\ref{x0k}) and (\ref{x-3}), 
we have the following difference equation for $\{x_{1,k}\}$: 
\[ (|m_0|-d+2k+1) \, x_{1,k+1} 
=  (d-2k+1) \, x_{1,k} + \sqrt{-1} m_0 \, (-1)^k \binom{d}{2k} 
    \prod_{l=1}^{k} \frac{2l-1}{|m_0|-d+2l-1}
\]
with $x_{1,0}=0$. Thus, we obtain 
\begin{equation} \label{x1k}
x_{1,k} = (-1)^{k+1} \bigl( \sgn(m_0) \sqrt{-1} \bigr) \binom{d}{2k-1} 
\frac{|m_0|}{d}
\prod_{l=1}^{k} \frac{2l-1}{|m_0|-d+2l-1}.
\end{equation} 

We prove the formula (\ref{xjk}) for general $x_{jk}$ by induction on the index $j$.
For $0 \le 2p, 2p+1,k \le d$, let us  prove
\begin{equation} \label{xjk2p}
\begin{split}
x_{2p, k} =&  (-1)^{k+p}  \binom{d}{2k-2p} 
    \prod_{l=1}^{k} \frac{2l-1}{|m_0|-d+2l-1} \\
          & \times \sum_{r=0}^{p}           
\binom{p}{r} \frac{\bigl( d-2r \bigr)!}{d!}
\prod_{l=1}^{r} \biggl\{ \frac{2k-2p-2l+2}{2k-2l+1} \bigl( |m_0|^2-(d-2l+2)^2 \bigr) \biggr\}, 
\end{split}
\end{equation}
\begin{equation} \label{xjk2p+1}
\begin{split}
x_{2p+1, k} =&  (-1)^{k+p+1} \bigl( \sgn(m_0) \sqrt{-1} \bigr) \binom{d}{2k-2p-1} \, |m_0|
    \prod_{l=1}^{k} \frac{2l-1}{|m_0|-d+2l-1} \\
          & \times \sum_{r=0}^{p}           
\binom{p}{r} \frac{\bigl( d-2r-1 \bigr)!}{d!}
\prod_{l=1}^{r} \biggl\{ \frac{2k-2p-2l}{2k-2l+1} \bigl( |m_0|^2-(d-2l+2)^2 \bigr) \biggr\}.
\end{split}
\end{equation}
Suppose that $2p+1 \le d$ and the above formulas 
are true for $x_{2p-1,k}, \, x_{2p,k}$ ($0 \le k \le d$), 
then substitute them into (\ref{x-2}):
\begin{equation} \label{deq:x2p+1k}
x_{2p+1,k}  = 
\frac{2p-2k-1}{d-2p} \, x_{2p-1,k} +\frac{|m_0|+d-2k+1}{d-2p} \, x_{2p-1,k-1}
+\frac{\sqrt{-1} \, m_0}{d-2p} \, x_{2p,k}.
\end{equation} 
Put 
\[ a(k) = \prod_{l=1}^{k} \frac{2l-1}{|m_0|-d+2l-1},  \quad
   b(r;k,p) = \prod_{l=1}^{r} \biggl\{ \frac{2k-2p-2l}{2k-2l+1} \bigl( |m_0|^2-(d-2l+2)^2 \bigr) \biggr\}.
\]
Then, we have
\begin{align} \label{2p-1,k}
& \frac{(-1)^{k+p+1}(2p-2k-1)}{\sqrt{-1} \, m_0 (d-2p)} \, x_{2p-1,k} \\
& = - \frac{2p-2k-1}{d-2p} 
\binom{d}{2k-2p+1} a(k)
\sum_{r=0}^{p-1}           
\binom{p-1}{r} \frac{\bigl( d-2r-1 \bigr)!}{d!} \, b(r;k,p-1) \nonumber \\
&= \frac{a(k)}{d-2p} \binom{d}{2k-2p-1} (d-2k+2p+1)(d-2k+2p) \nonumber \\   
 & \quad \times \sum_{r=0}^{p-1}        
\binom{p-1}{r} \frac{\bigl( d-2r-1 \bigr)!}{d!} \, \frac{b(r;k,p)}{2k-2p-2r},
\nonumber 
\end{align}
and
\begin{align} \label{2p-1,k-1}
& \frac{(-1)^{k+p+1}(|m_0|+d-2k+1)}{\sqrt{-1} \, m_0 (d-2p)} \, x_{2p-1,k-1} \\
& = \frac{|m_0|+d-2k+1}{d-2p} 
\binom{d}{2k-2p-1} a(k-1)
\sum_{r=0}^{p-1}           
\binom{p-1}{r} \frac{\bigl( d-2r-1 \bigr)!}{d!} \, b(r;k-1,p-1) \nonumber \\
&  = \frac{a(k)}{d-2p} \binom{d}{2k-2p-1}
\bigl\{ |m_0|^2-(d-2k+1)^2 \bigr\}
\sum_{r=0}^{p-1}           
\binom{p-1}{r} \frac{\bigl( d-2r-1 \bigr)!}{d!} \, \frac{b(r;k,p)}{2k-2r-1} \nonumber \\
& = \frac{a(k)}{d-2p} \binom{d}{2k-2p-1}
\sum_{r=0}^{p-1}           
\binom{p-1}{r} \frac{\bigl( d-2r-1 \bigr)!}{d!} \, \frac{b(r+1;k,p)}{2k-2p-2r-2} \nonumber \\
& \quad + 
\frac{a(k)}{d-2p} \binom{d}{2k-2p-1}
\sum_{r=0}^{p-1}           
\binom{p-1}{r} \frac{\bigl( d-2r-1 \bigr)!}{d!} \, (2d-2r-2k+1) \, b(r;k,p).
\nonumber 
\end{align}
Here, we used the equality: 
\[  \bigl\{ |m_0|^2-(d-2k+1)^2 \bigr\} =
     \bigl\{ |m_0|^2-(d-2r)^2 \bigr\} + (2d-2r-2k+1)(2k-2r-1)
\]    
in the above calculation. By noting the equality:
\[  \frac{(d-2k+2p+1)(d-2k+2p)}{2k-2p-2r} + (2d-2r-2k+1)
= \frac{(d-2r)(d-2r+1)}{2k-2p-2r}  -2p,
\]
we have
\begin{align*}
& \mbox{(\ref{2p-1,k})+(\ref{2p-1,k-1})} \\
& = \frac{a(k)}{d-2p} \binom{d}{2k-2p-1}
\sum_{r=0}^{p-1}           
\binom{p-1}{r} \frac{\bigl( d-2r-1 \bigr)!}{d!} 
\bigg\{  \frac{(d-2r)(d-2r+1)}{2k-2p-2r}  -2p \biggr\} 
b(r;k,p) \\
& \quad +  \frac{a(k)}{d-2p} \binom{d}{2k-2p-1}
\sum_{r=1}^{p}           
\binom{p-1}{r-1} \frac{\bigl( d-2r+1 \bigr)!}{d!} \, \frac{b(r;k,p)}{2k-2p-2r} \\
& = \frac{a(k)}{d-2p} \binom{d}{2k-2p-1}
\sum_{r=0}^{p}           
\binom{p}{r} \frac{\bigl( d-2r+1 \bigr)!}{d!} \, \frac{b(r;k,p)}{2k-2p-2r} \\
& \quad -2p \, 
\frac{a(k)}{d-2p} \binom{d}{2k-2p-1}
\sum_{r=0}^{p-1}           
\binom{p-1}{r} \frac{\bigl( d-2r-1 \bigr)!}{d!} \, b(r;k,p). 
\end{align*}

On the other hand, we have
\begin{align}
& \frac{(-1)^{k+p+1}}{\sqrt{-1} \, m_0 (d-2p)} \, x_{2p,k} \label{2p,k} \\
& = -\frac{1}{d-2p} 
\binom{d}{2k-2p} a(k)
\sum_{r=0}^{p}           
\binom{p}{r} \frac{\bigl( d-2r \bigr)!}{d!} \, b(r;k,p-1) \nonumber \\
& = -\frac{a(k)}{d-2p} 
\binom{d}{2k-2p-1} (d-2k+2p+1)
\sum_{r=0}^{p}           
\binom{p}{r} \frac{\bigl( d-2r \bigr)!}{d!} \, \frac{b(r;k,p)}{2k-2p-2r} 
\nonumber \\
& = \frac{a(k)}{d-2p} 
\binom{d}{2k-2p-1} 
\sum_{r=0}^{p}           
\binom{p}{r} \frac{\bigl( d-2r \bigr)!}{d!} \, b(r;k,p) \nonumber \\
& \quad - \frac{a(k)}{d-2p} 
\binom{d}{2k-2p-1} 
\sum_{r=0}^{p}           
\binom{p}{r} \frac{\bigl( d-2r +1 \bigr)!}{d!} \, \frac{b(r;k,p)}{2k-2p-2r}.
\nonumber 
\end{align}
Therefore, we have
\begin{align*}
& \mbox{(\ref{2p-1,k})+(\ref{2p-1,k-1})+(\ref{2p,k})} \\
& = \frac{a(k)}{d-2p} \binom{d}{2k-2p-1} 
\sum_{r=0}^{p}           
\binom{p}{r} \frac{\bigl( d-2r \bigr)!}{d!} \, b(r;k,p) \\
& \quad -2p \, 
\frac{a(k)}{d-2p} \binom{d}{2k-2p-1}
\sum_{r=0}^{p-1}           
\binom{p-1}{r} \frac{\bigl( d-2r-1 \bigr)!}{d!} \, b(r;k,p) \\
& = a(k) \, \binom{d}{2k-2p-1} 
\sum_{r=0}^{p}           
\binom{p}{r} \frac{\bigl( d-2r-1 \bigr)!}{d!} \, b(r;k,p). 
\end{align*}
By (\ref{deq:x2p+1k}) and the above formula, we obtain
\[ x_{2p+1,k} = (-1)^{k+p+1} \bigl( m_0 \sqrt{-1} \bigr) 
a(k) \, \binom{d}{2k-2p-1} 
\sum_{r=0}^{p}           
\binom{p}{r} \frac{\bigl( d-2r-1 \bigr)!}{d!} \, b(r;k,p).
\] 
Therefore, the formula is valid for $x_{2p+1,k}$ ($0 \le k \le d$).
Similarly, 
if we suppose that $2p \le d$ and the formulas 
(\ref{xjk2p}) and (\ref{xjk2p+1})
are true for $x_{2p-1,k}, \, x_{2p-2,k}$ ($0 \le k \le d$), 
then we can prove the formula is also valid for $x_{2p,k}$ ($0 \le k \le d$).
It completes the proof.
\end{proof}

Let us complete the proof of Theorem \ref{th1}. \\
{\it Proof of \ref{th1}}. By the condition
$|m_0| \ge d$, $|m_0|\equiv d \pmod{2}$, 
Propositions \ref{prop:gj} and \ref{prop:x_jk}, $\{ g_{j}(a) \}_{j=0}^{d}$
is a {\it non-zero} smooth solution of the system (\ref{M1}), (\ref{M2}) and (\ref{M3}) 
in Proposition \ref{Miyazaki}.
Furthermore, we can check that all of $\{ e^{-2 \pi(h_1a_1^2+h_2a_2^2)} \, g_{j}(a) \}_{j=0}^{d}$ 
are rapidly decreasing when each $a_1,a_2$ tends to infinity, by Definition \ref{def:fk}. 
We completes the proof. \hfill $\qed$

\section{Remarks on $\{ x_{jk} \}$} 

We have some remarks on the sequence of complex numbers $\{ x_{jk} \}_{0 \le j,k \le d}$
appearing in Theorem \ref{th1} and Proposition \ref{prop:x_jk}. 
Let $X_d = (x_{jk})_{0 \le j,k \le d} \in M_{d+1}(\C)$ be the square matrix of size $(d+1)$,
defined by the sequence $\{ x_{jk} \}_{0 \le j,k \le d}$.

Put $\varepsilon = \sgn(m_0) \sqrt{-1}$ and $t=|m_0|$.
Then $x_{jk}$ is given by
\begin{equation} 
\begin{split}
x_{jk} =&  (-1)^{j+k} \varepsilon^{j} \binom{d}{2k-j} \, t^{\delta(j)}
    \prod_{l=1}^{k} \frac{2l-1}{t-d+2l-1} \\
          & \times \sum_{r=0}^{[j/2]}           
\binom{[j/2]}{r} \frac{\bigl( d-2r-\delta(j) \bigr)!}{d!}
\prod_{l=1}^{r} \biggl\{ \frac{2k-j-2l+2-\delta(j)}{2k-2l+1} \bigl( t^2-(d-2l+2)^2 \bigr) \biggr\}, 
\end{split}
\end{equation}
where, $\delta(j) = 1$ if $j$ is odd, otherwise $0$. 

Furthermore, we set
\begin{equation}
Z_{d} = (z_{jk})_{0 \le j,k \le d} \in M_{d+1}(\C) \mbox{ with } 
z_{jk} = \varepsilon^{-j} x_{jk}. 
\end{equation}

Though the Harish-Chandra parameter $\Lambda = (\lambda_1-1,\lambda_2) \in \Xi_{\mathrm{II}}$
implies that $d=\lambda_1-\lambda_2 \ge 4$, 
we formally write down matrices $Z_d$ for $1 \le d \le 7$. 

\begin{example}[$Z_d$ for $1 \le d \le 7$]
\[
Z_1 = 
\begin{bmatrix}
1 & 0 \\
0 & 1 
\end{bmatrix}, \quad
Z_2 = 
\begin{bmatrix}
1 & - \frac{1}{t-1} & 0 \\
0 & \frac{t}{t-1} & 0 \\
0 & -\frac{1}{t-1} & 1
\end{bmatrix}, 
\quad 
Z_3 = 
\begin{bmatrix}
1 & -\frac{3}{t-2} & 0 & 0  \\
0 & \frac{t}{t-2} & - \frac{1}{t-2} & 0 \\
0 & -\frac{1}{t-2} & \frac{t}{t-2} & 0 \\
0 &  0 & -\frac{3}{t-2} & 1 
\end{bmatrix}, 
\]
\[
Z_4=
\begin{bmatrix}
1 & -\frac{6}{t-3} & \frac{3}{(t-3)(t-1)} & 0 & 0  \\
0 &  \frac{t}{t-3}   & - \frac{3 t}{(t-3)(t-1)} & 0  & 0 \\
0 & -\frac{1}{t-3} & \frac{t^2+2}{(t-3)(t-1)} & -\frac{1}{t-3} & 0 \\
0 &                     0 & -\frac{3 t}{(t-3)(t-1)} &  \frac{t}{t-3} & 0 \\
0 &                     0 & \frac{3}{(t-3)(t-1)} & -\frac{6}{t-3} &  1
\end{bmatrix},
\]
\[
Z_5=
\begin{bmatrix}
1 & -\frac{10}{t-4} & \frac{15}{(t-4)(t-2)} & 0 & 0 & 0 \\
0 & \frac{t}{t-4} & -\frac{6t}{(t-4)(t-2)} & \frac{3}{(t-4)(t-2)}  & 0 & 0 \\
0 & -\frac{1}{t-4} & \frac{t^2+5}{(t-4)(t-2)} & -\frac{3t}{(t-4)(t-2)} & 0 & 0\\
0 &  0 & -\frac{3 t}{(t-4)(t-2)} &  \frac{t^2+5}{(t-4)(t-2)} & - \frac{1}{t-4} & 0 \\
0 & 0 & \frac{3}{(t-4)(t-2)} & - \frac{6t}{(t-4)(t-2)} &  \frac{t}{t-4} & 0 \\
0 & 0 & 0 & \frac{15}{(t-4)(t-2)} & -\frac{10}{t-4} & 1 
\end{bmatrix},
\]
\[
 Z_6= 
\begin{bmatrix}
1 & - \frac{15}{t-5} & \frac{45}{(t-5)(t-3)} & -\frac{15}{(t-5)(t-3)(t-1)} & 0 & 0 & 0 \\
0 & \frac{t}{t-5} & - \frac{10 t}{(t-5)(t-3)} & \frac{15t}{(t-5)(t-3)(t-1)}  & 0 & 0 & 0 \\
0 & -\frac{1}{t-5} & \frac{t^2+9}{(t-5)(t-3)} & - \frac{3(2t^2+3)}{(t-5)(t-3)(t-1)} & \frac{3}{(t-5)(t-3)} & 0 & 0\\
0 &  0 & -\frac{3 t}{(t-5)(t-3)} &  \frac{t(t^2+14)}{(t-5)(t-3)(t-1)} & - \frac{3t}{(t-5)(t-3)} & 0 & 0\\
0 & 0 & \frac{3}{(t-5)(t-3)} & - \frac{3(2t^2+3)}{(t-5)(t-3)(t-1)} &  \frac{t^2+9}{(t-5)(t-3)} & -\frac{1}{t-5}  & 0 \\
0 & 0 & 0 & \frac{15t}{(t-5)(t-3)(t-1)} & - \frac{10t}{(t-5)(t-3)} & \frac{t}{t-5} & 0 \\
0 & 0 & 0 & - \frac{15}{(t-5)(t-3)(t-1)} & \frac{45}{(t-5)(t-3)} & - \frac{15}{t-5} & 1 
\end{bmatrix}, 
\]
\[ 
 Z_7= 
\begin{bmatrix}
1 & - \frac{21}{t-6} & \frac{105}{(t-6)(t-4)} & -\frac{105}{(t-6)(t-4)(t-2)} & 0 & 0 & 0 & 0 \\
0 & \frac{t}{t-6} & - \frac{15 t}{(t-6)(t-4)} & \frac{45t}{(t-6)(t-4)(t-2)}  & -\frac{15}{(t-6)(t-4)(t-2)}   & 0 & 0 & 0 \\
0 & -\frac{1}{t-6} & \frac{t^2+14}{(t-6)(t-4)} & - \frac{5(2t^2+7)}{(t-6)(t-4)(t-2)} & \frac{15t}{(t-6)(t-4)(t-2)} & 0 & 0 &0 \\
0 &  0 & -\frac{3 t}{(t-6)(t-4)} &  \frac{t(t^2+26)}{(t-6)(t-4)(t-2)} & - \frac{3(2t^2+7)}{(t-6)(t-4)(t-2)} & \frac{3}{(t-6)(t-4)} & 0 & 0 \\
0 & 0 & \frac{3}{(t-6)(t-4)} & - \frac{3(2t^2+7)}{(t-6)(t-4)(t-2)} &  \frac{t(t^2+26)}{(t-6)(t-4)(t-2)} & -\frac{3t}{(t-6)(t-4)}  & 0 & 0 \\
0 & 0 & 0 & \frac{15t}{(t-6)(t-4)(t-2)} & - \frac{5(2t^2+7)}{(t-6)(t-4)(t-2)} & \frac{t^2+14}{(t-6)(t-4)} & -\frac{1}{t-6} & 0 \\
0 & 0 & 0 & - \frac{15}{(t-6)(t-4)(t-2)} & \frac{45t}{(t-6)(t-4)(t-2)} & - \frac{15t}{(t-6)(t-4)} & \frac{t}{t-6} & 0 \\
0 & 0 & 0 & 0 & -\frac{105}{(t-6)(t-4)(t-2)} & \frac{105}{(t-6)(t-4)} & -\frac{21}{t-6} & 1
\end{bmatrix}
.\]
\end{example}

By direct calculation, we have 
\begin{proposition}[$\det Z_d$ for $1 \le d \le 7$]
\[ \det Z_1= 1, \quad \det Z_2= \frac{t}{t-1}, \quad \det Z_3 = \frac{t^2-1}{(t-2)^2}, \quad
\det Z_4 = \frac{t^2(t^2-4)}{(t-1)^1(t-3)^3},
\]
\[ \det Z_5 = \frac{(t^2-1)^2(t^2-9)^1}{(t-2)^2(t-4)^4}, \quad
\det Z_6 = \frac{t^3(t^2-4)^2(t^2-16)^1}{(t-1)^1(t-3)^3(t-5)^5}, \]
\[
\det Z_7 = \frac{(t^2-1)^3(t^2-9)^2(t^2-25)^1}{(t-2)^2(t-4)^4(t-6)^6}.  
\]
\end{proposition}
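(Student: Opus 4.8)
The plan is to verify the seven identities by a \emph{direct computation} for each value of $d$, but organized so that the structure of $Z_d = (z_{jk})$, $z_{jk} = \varepsilon^{-j} x_{jk}$, does most of the work. Two features of the matrix are visible from the closed form \eqref{xjk}. First, the binomial factor $\binom{d}{2k-j}$ forces $z_{jk} = 0$ unless $\lceil j/2\rceil \le k \le \lfloor (j+d)/2\rfloor$; in particular $Z_d$ has a staircase support and its first and last columns contain only the single nonzero entries $z_{00} = z_{dd} = 1$, so the outermost rows and columns can be peeled off by Laplace expansion. Second, $Z_d$ is \emph{centrosymmetric}, i.e. $z_{jk} = z_{d-j,d-k}$; this can be read off \eqref{xjk} directly, or --- more painlessly --- deduced from the uniqueness part of Proposition \ref{prop:x_jk} together with the fact that the system \eqref{x-1}, \eqref{x-2}, \eqref{x-3}, \eqref{x-4} is carried into itself by the substitution $(j,k)\mapsto(d-j,d-k)$ (so that $(\varepsilon^{-j}x_{d-j,d-k})_{j,k}$ again solves it, hence equals $(z_{jk})$).

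Next I would use centrosymmetry to split the determinant. Conjugating a centrosymmetric $(d+1)\times(d+1)$ matrix by the standard orthogonal symmetrizing matrix built from the identity and the exchange matrix $J$ (with the usual modification when $d+1$ is odd) block-diagonalizes it, giving
\[ \det Z_d = \det P_d \cdot \det Q_d, \]
where $P_d$ and $Q_d$ are explicit matrices, assembled from the blocks of $Z_d$, of sizes $\lceil (d+1)/2\rceil$ and $\lfloor (d+1)/2\rfloor$; for $1 \le d \le 7$ the larger of the two is at most $4 \times 4$. I would then pull out of the $k$-th column of each block the scalar $\prod_{l=1}^{k}\frac{2l-1}{t-d+2l-1}$ common to all its entries, which reduces everything to determinants of small matrices with \emph{polynomial} entries in $t$. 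The denominators $\prod_i(t-2i)^{2i}$ (odd $d$) and $\prod_i(t-(2i-1))^{2i-1}$ (even $d$) in the stated answers come out of exactly these column factors, after the cancellations of powers of $t$ that are already present in the entries of $Z_d$.

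Finally, for each $d\in\{1,\dots,7\}$ I would evaluate the two resulting polynomial determinants outright --- by row/column operations or cofactor expansion, the matrices being small --- and reassemble and simplify; the numerators $t^m$ and $\prod_j(t^2-j^2)^{e_j}$ appear at this stage. Each computation can be cross-checked against the displayed matrices $Z_1,\dots,Z_7$.

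\textbf{The main obstacle} is simply that the proposition asserts only these seven cases and offers no uniform formula, so there is nothing to prove once and for all: the content is the bookkeeping of seven separate (if mechanical) determinant evaluations, and in particular of which linear factors $t-c$ occur and with what multiplicity. The one genuine simplification is the centrosymmetric block-splitting; without it even $\det Z_7$, a $7\times 7$ determinant of rational functions, is unwieldy. A secondary nuisance is checking the two structural features cleanly from \eqref{xjk}, the nested sum over $r$ making the centrosymmetry least transparent --- the uniqueness argument indicated above avoids this.
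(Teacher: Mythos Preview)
Your proposal is sound and is essentially the same approach as the paper's: the paper's entire proof is the single sentence ``By direct calculation, we have'' followed by the statement, with no further details. Your plan is a perfectly reasonable (and more structured) way of organizing that direct calculation; the centrosymmetry observation is a genuine simplification the paper does not mention, and it is easily confirmed on the displayed matrices $Z_1,\dots,Z_7$. One small slip: $Z_d$ has size $(d+1)\times(d+1)$, so $Z_7$ is $8\times 8$, not $7\times 7$; your block sizes $\lceil(d+1)/2\rceil$ and $\lfloor(d+1)/2\rfloor$ are stated correctly, however, and the largest block for $d\le 7$ is indeed $4\times 4$.
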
   
Since $\det X_d = \varepsilon^{d(d+1)/2} \det Z_d$, we have the following
conjecture on $ \det X_d $. 

\begin{conjecture} \label{con:det}
For a natural number $q$, we conjecture that
\begin{align*}
\det X_{2q-1} &= \bigl( \sgn(m_0) \sqrt{-1} \bigr)^{q(2q-1)} \, 
\frac{\prod\limits_{l=1}^{q-1} \bigl( t^2-(2l-1)^2 \bigr)^{q-l}}{\prod\limits_{l=1}^{q-1}(t-2l)^{2l}},
\\
\det X_{2q} &= \bigl( \sgn(m_0) \sqrt{-1} \bigr)^{q(2q+1)} \, 
\frac{t ^q\prod\limits_{l=1}^{q-1} \bigl( t^2-(2l)^2 \bigr)^{q-l}}{\prod\limits_{l=1}^{q}(t-2l+1)^{2l-1}}.
\end{align*}
\end{conjecture}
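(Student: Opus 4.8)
The plan is to prove Conjecture \ref{con:det} by induction on $d$, moving in steps of $2$ so as to respect the parity distinction in the statement; the cases $d\le 2$ are immediate, and $3\le d\le 7$ are checked against the explicit matrices $Z_d$ listed above together with $\det X_d=\varepsilon^{d(d+1)/2}\det Z_d$.

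The first move in the inductive step is a reduction to an ``interior'' determinant. By (\ref{x-3}) and (\ref{x-4}) --- equivalently, because $\binom{d}{2k-j}$ vanishes unless $j/2\le k\le (j+d)/2$ --- the $0$-th column of $X_d$ is the first standard basis vector $e_0$ and the $d$-th column is $\varepsilon^{d}e_{d}$. Expanding $\det X_d$ along these two columns gives
\[
\det X_d=\varepsilon^{d}\det Y_d ,\qquad Y_d:=\bigl(x_{jk}\bigr)_{1\le j,k\le d-1}.
\]
So everything reduces to the one-step identity
\[
\det X_d=\varepsilon^{2d-1}\,\frac{\prod_{i=1}^{d-1}(t+d-2i)}{(t-d+1)^{d-1}}\,\det X_{d-2}\qquad(d\ge 3),
\]
equivalently $\det Y_d=\varepsilon^{d-1}\dfrac{\prod_{i=1}^{d-1}(t+d-2i)}{(t-d+1)^{d-1}}\det X_{d-2}$: telescoping this separately over the odd and the even values of $d$ from the small cases reproduces the two closed forms in the conjecture.

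Proving this one-step identity is, I expect, the main obstacle, and I see two routes. The combinatorial route substitutes the closed formula (\ref{xjk}) into a Laplace expansion of $\det Y_d$, factors the common factor $\prod_{l=1}^{k}\frac{2l-1}{t-d+2l-1}$ out of the $k$-th column, and attempts to identify the remaining polynomial matrix --- after the index shifts $j\mapsto j-1$, $k\mapsto k-1$, $d\mapsto d-2$ --- with a rescaling of the polynomial matrix attached to $X_{d-2}$; the snag is that this column factorization manufactures spurious poles at $t=-d+3,-d+5,\dots$, so the residual polynomial determinant must vanish to exactly the orders that cancel them, and exhibiting this cancellation looks to require a family of binomial summation identities of the type (\ref{eq:binom}). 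The structural route, which I think is more promising, is to show that the interior entries $x_{jk}$ $(1\le j,k\le d-1)$, after $j\mapsto j-1$, $k\mapsto k-1$, satisfy the very system (\ref{x-1})--(\ref{x-4}) with $d$ replaced by $d-2$ \emph{together with} inhomogeneous terms supported on the former boundary; since Proposition \ref{prop:x_jk} guarantees that the homogeneous system has a one-dimensional solution space, $Y_d$ is then a controlled perturbation of a scalar multiple of $X_{d-2}$, and one reads off the determinant ratio by re-running the elimination used in the proof of Proposition \ref{prop:x_jk} while carrying the boundary corrections. Either way, bookkeeping those corrections is the hard part.

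As an independent check --- and a possible alternative to the induction --- one may instead determine the rational function $\det X_d(t)$ from its leading term and its divisor. The limit $X_d(t)\to \mathrm{diag}(1,\varepsilon,\dots,\varepsilon^{d})$ as $t\to\infty$, a degree count inside (\ref{xjk}), gives $\det X_d(t)\to\varepsilon^{d(d+1)/2}$ and shows that numerator and denominator have the same degree. The poles are confined to $t\in\{d-1,d-3,\dots,1-d\}$ by the column denominators, so it remains to rule out the values $\le 0$ and to compute the orders at the survivors; the zeros at $t=0,\pm1,\dots,\pm(d-2)$ should arise from explicit $\C$-linear dependences among the columns of $X_d$ at those values of $t$, forced by the coefficients in (\ref{x-1})--(\ref{x-2}) degenerating there. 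Matching leading coefficient, zeros, and poles then pins down $\det X_d$ and must recover the conjectured formula.
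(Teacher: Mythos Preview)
The paper does \emph{not} prove this statement: it is explicitly presented as a conjecture, supported only by the direct computations of $\det Z_d$ for $1\le d\le 7$ recorded just above it. There is therefore no ``paper's own proof'' to compare against.

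Your proposal is likewise not a proof. It is a strategy outline in which the decisive steps are left open: in the combinatorial route you say only that the cancellation ``looks to require a family of binomial summation identities''; in the structural route you say one ``reads off the determinant ratio'' after ``carrying the boundary corrections'', and then acknowledge that ``bookkeeping those corrections is the hard part''; in the divisor approach the zero orders ``should arise from explicit $\C$-linear dependences'', but none are produced. None of the three branches is brought to a conclusion.

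That said, the architecture is sensible. The boundary reduction $\det X_d=\varepsilon^{d}\det Y_d$ is correct (since $x_{0,0}=1$ and one checks from (\ref{xjk}) that $x_{d,d}=\varepsilon^{d}$, consistent with the displayed $Z_d$), and the one-step identity
\[
\det X_d=\varepsilon^{2d-1}\,\frac{\prod_{i=1}^{d-1}(t+d-2i)}{(t-d+1)^{d-1}}\,\det X_{d-2}
\]
does telescope to the two closed forms and agrees with the tabulated values for $d\le 7$. If you can establish that identity --- or, in the divisor approach, actually exhibit the column dependences at $t\in\{0,\pm 1,\dots,\pm(d-2)\}$ with the correct multiplicities and rule out poles at the nonpositive candidates --- you will have gone beyond the paper, which leaves the statement open. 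As it stands, however, what you have written is a plan, not a proof.
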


In this article, we have shown that the Siegel-Whittaker functions of rapid decay
for the large discrete series representations of $\mathrm{Sp}(2,\R)$
are described by the partially confluent hypergeometric functions $\{ f_k(a) \}_{k=0}^{d}$.
Conversely, if the above conjecture is true, we have,

\begin{corollary}
We assume that 
\[ |m_0| \ge d   \quad   \mbox{  and  } \quad  |m_0|\equiv d \pmod{2}.
\]
Let $\{ c_j (a)\}_{j=0}^{d}$ be a smooth 
solution of the system (\ref{M1}), (\ref{M2}) and (\ref{M3}) in Proposition \ref{Miyazaki}, 
and suppose that all of $\{ e^{-2 \pi(h_1a_1^2+h_2a_2^2)} c_j (a) \}_{j=0}^{d}$
are rapidly decreasing.

If Conjecture \ref{con:det} is true, 
then all of $\{ f_{k}(a) \}_{k=0}^{d}$ are $\C$-linear combinations of 
elements of $\{ c_j(a) \}_{j=0}^{d}$.
\end{corollary}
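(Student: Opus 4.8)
The plan is to deduce this from the multiplicity-one assertion of Theorem \ref{th1} together with the non-vanishing of $\det X_d$ that Conjecture \ref{con:det} supplies. First I would use Proposition \ref{Miyazaki}(i) to pass between solutions and Siegel--Whittaker functions: a tuple $\{c_j(a)\}_{j=0}^{d}$ is a smooth solution of (\ref{M1}), (\ref{M2}), (\ref{M3}) exactly when
\[
\phi(a)=\sum_{j=0}^{d}\Bigl\{(\sqrt{h_1}a_1)^{\lambda_1-j}(\sqrt{h_2}a_2)^{\lambda_2+j}e^{-2 \pi(h_1a_1^2+h_2a_2^2)}c_j(a)\Bigr\}\,v_j
\]
belongs to $\mathrm{SW}(\pi,\eta,\tau)|_A$. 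Since the monomials $(\sqrt{h_1}a_1)^{\lambda_1-j}(\sqrt{h_2}a_2)^{\lambda_2+j}$ are nowhere vanishing and of polynomial growth on $A=(\R_{>0})^2$, the hypothesis that every $e^{-2 \pi(h_1a_1^2+h_2a_2^2)}c_j(a)$ is rapidly decreasing is equivalent to $\phi\in\mathrm{SW}(\pi,\eta,\tau)^{\text{rap}}|_A$. By Theorem \ref{th1}(i) this space is one-dimensional, spanned by the $\phi_{SW}$ there whose radial coefficients are $g_j(a)=\sum_{k=0}^{d}x_{jk}f_k(a)$; hence $c_j(a)=\lambda\,g_j(a)$ for all $j$ for some $\lambda\in\C$, and $\lambda\neq0$ as soon as $\{c_j\}$ is not identically zero (which we assume, the statement being empty otherwise).

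Now the rest is linear algebra. Write $\mathbf f=(f_0,\dots,f_d)^{t}$, $\mathbf g=(g_0,\dots,g_d)^{t}$, $\mathbf c=(c_0,\dots,c_d)^{t}$; the relations $g_j=\sum_k x_{jk}f_k$ read $\mathbf g=X_d\mathbf f$, and we have just shown $\mathbf c=\lambda\mathbf g=\lambda X_d\mathbf f$. If $X_d$ is invertible, then $\mathbf f=\lambda^{-1}X_d^{-1}\mathbf c$, so each
\[
f_k(a)=\lambda^{-1}\sum_{j=0}^{d}(X_d^{-1})_{kj}\,c_j(a)
\]
is a $\C$-linear combination of $c_0(a),\dots,c_d(a)$, which is the claim. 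Thus everything reduces to showing $\det X_d\neq0$.

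Finally I would check that, assuming Conjecture \ref{con:det}, the hypotheses $|m_0|\ge d$ and $|m_0|\equiv d\pmod2$ force $\det X_d\neq0$. Since $\det X_d=\varepsilon^{d(d+1)/2}\det Z_d$ with $\varepsilon=\sgn(m_0)\sqrt{-1}\neq0$, it is enough that neither the numerator nor the denominator of the conjectured closed form for $\det X_d$ vanishes at $t=|m_0|$. When $d=2q-1$ the numerator $\prod_{l=1}^{q-1}\bigl(t^2-(2l-1)^2\bigr)^{q-l}$ vanishes only for $t\in\{1,3,\dots,2q-3\}$, all of which are $\le d-2<|m_0|$; when $d=2q$ the numerator $t^{q}\prod_{l=1}^{q-1}\bigl(t^2-(2l)^2\bigr)^{q-l}$ vanishes only for $t\in\{0,2,\dots,2q-2\}$, again all $\le d-2<|m_0|$. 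The denominator, $\prod_{l=1}^{q}(t-2l+1)^{2l-1}$ or $\prod_{l=1}^{q-1}(t-2l)^{2l}$, has its roots among the values $d-2l+1$; but $|m_0|\ge d$ gives $|m_0|-d+2l-1\ge 2l-1>0$ for $1\le l\le d$, which is at the same time exactly the condition under which every factor $\prod_{l=1}^{k}\frac{2l-1}{|m_0|-d+2l-1}$ occurring in (\ref{xjk}) is finite and non-zero. Hence $\det X_d\neq0$, $X_d$ is invertible, and the corollary follows.

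The only non-formal input is Conjecture \ref{con:det} itself; granting it, the single point deserving attention is the elementary parity-and-inequality check in the last paragraph, namely that all roots of the numerator of $\det X_d$ are strictly below $d$, so that the hypothesis $|m_0|\ge d$ genuinely excludes them. That is precisely where the assumption $|m_0|\ge d$ would be used.
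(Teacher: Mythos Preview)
Your proof is correct and follows the same approach as the paper, though you spell out considerably more than the paper does: its proof consists of the single observation that the hypotheses and Conjecture~\ref{con:det} give $\det X_d\neq0$, so $X_d^{-1}$ exists. You correctly make explicit the implicit step that multiplicity one (Theorem~\ref{th1}(i)) forces $c_j=\lambda g_j$, and then the invertibility of $X_d$ lets you solve $\mathbf g=X_d\mathbf f$ for $\mathbf f$.

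One small remark: your treatment of the case $\{c_j\}\equiv0$ is slightly off --- the statement is not ``empty'' in that case but literally false, since the $f_k$ are nonzero; the corollary tacitly assumes a nonzero solution, as you in effect note. Also, your sentence about the denominator ``has its roots among the values $d-2l+1$'' is imprecise (the roots are $2,4,\dots,d-1$ or $1,3,\dots,d-1$ depending on parity), but your conclusion that they are all $\le d-1<|m_0|$ is correct and is what matters.
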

\begin{proof}
Since $|m_0| \ge d$ and $|m_0|\equiv d \pmod{2}$, we can check that
\[ \det X_d \ne 0, \]
and $X_d^{-1}$ exists. It completes the proof.
\end{proof}

\bibliographystyle{plain}
\def\cprime{$'$} \def\polhk#1{\setbox0=\hbox{#1}{\ooalign{\hidewidth
  \lower1.5ex\hbox{`}\hidewidth\crcr\unhbox0}}}
\providecommand{\bysame}{\leavevmode\hbox
to3em{\hrulefill}\thinspace}
\providecommand{\MR}{\relax\ifhmode\unskip\space\fi MR }
\providecommand{\MRhref}[2]{%
  \href{http://www.ams.org/mathscinet-getitem?mr=#1}{#2}
} \providecommand{\href}[2]{#2}

\end{document}